\newtheorem{theorem}{Theorem}[section]
\theoremstyle{Corollary}
\newtheorem{cor}[theorem]{Corollary}
\newtheorem{prop}[theorem]{Proposition}
\newtheorem{rema}[theorem]{Remark}
\newtheorem{prob}[theorem]{Problem}
\numberwithin{equation}{section}
\begin{document}

\title{Notes on the uniqueness of Type II Yamabe metrics}

\author{Shota Hamanaka}

\address{Department of Mathematics, Graduate School of Science, Osaka University, Toyonaka, Osaka 560-0043, Japan}

\email{hamanaka.shota.sci@osaka-u.ac.jp}

\author{Pak Tung Ho}
\address{Department of Mathematics, Tamkang University, Tamsui, New Taipei City 251301, Taiwan}

\email{paktungho@yahoo.com.hk}

\subjclass[2020]{Primary; 53C18 Secondary; 53C21, 58J32}

\date{20th of September, 2024}

\begin{abstract}
In this paper, we study the uniqueness of type II Yamabe metrics in conformal classes on a compact connected manifold with boundary, and
we investigate Obata-type theorems for type II Yamabe metrics.
In particular, we establish a theorem which gives a sufficient condition for a metric to be the unique Type II Yamabe metric in its conformal class.
We also prove the corresponding theorem for the CR Yamabe problem on closed manifolds.

\color{blue}\textbf{This preprint has not any post-submission improvements or corrections. The Version of Record of this article is 
published in \textit{Nonlinear Differential Equations and Applications NoDEA}, and is available online at https://doi.org/10.1007/s00030-025-01092-0}\color{black}.
\end{abstract}

\maketitle

\section{Introduction}

The classical Yamabe problem on closed (i.e. compact without boundary) manifolds was completely solved by Yamabe \cite{yamabe1960deformation}, Trudinger \cite{trudinger1968remarks}, Aubin \cite{aubin1976equations} and Schoen \cite{schoen1984conformal}.
Escobar \cite{escobar1992yamabe, escobar1992conformal} introduced and studied two corresponding problems on compact manifolds with boundary.
The first problem \cite{escobar1992yamabe} is that for a given conformal class $C$ on a compact connected manifold with boundary, find a metric $g \in C$ with constant scalar curvature and minimal boundary (i.e., the mean curvature is zero along the boundary).
The second one \cite{escobar1992conformal} is that for a given conformal class $C$ on a compact connected manifold with boundary, find a scalar-flat metric $g \in C$ with constant mean curvature
on the boundary.
Similar to the studies of the classical Yamabe problem, Escobar \cite{escobar1992conformal, escobar1992yamabe} carried out the variational method to prove the existence of such metrics.
In that process, he defined certain conformal invariant corresponding to each problem, which was defined as the infimum of the appropriate energy in a fixed conformal class.
We call it corresponding to the second problem as \textit{the type II Yamabe constant} and a metric attaining this value as \textit{the type II Yamabe metric}
(see Section \ref{section-2} for the precise definition).

On closed manifolds, every metric with constant scalar curvature is a Yamabe metric in its conformal class up to a rescaling provided that its scalar curvature is nonpositive. This fact follows from the maximum principle.
Similarly, on compact manifolds with boundary, every metric with constant scalar curvature with minimal boundary is a relative Yamabe metric (i.e., the Yamabe metric corresponding to Escobar's first problem) in its relative conformal class (i.e., the set of conformally related metrics with minimal boundary) up to a rescaling provided that its scalar curvature is nonpositive.
And, on compact manifolds with boundary, every metric with zero scalar curvature and constant mean curvature on boundary is a type II Yamabe metric in its conformal class up to a rescaling provided that its mean curvature is nonpositive.

On the other hand, on closed manifolds, Obata's theorem \cite{obata1962certain, obata1971conjectures} states that a metric with constant scalar curvature is unique in its conformal class (up to a conformal diffeomorphism and a rescaling) if the conformal class contains an Einstein metric.
In particular, an Einstein metric is a Yamabe metric (see Section \ref{section-obata} of this paper).
However the converse is not true in general. Indeed, there are examples of non-Einstein but Yamabe metrics which were given by Kato in \cite{shin1994examples}.
On compact manifolds with boundary, there are some kinds of Obata-type theorems for Escobar's first problem \cite{akutagawa2021obata, escobar1990uniqueness}.
In \cite{hamanaka2021non}, the first author gave examples of non-Einstein but Yamabe metrics with minimal boundaries.
See also \cite{akutagawa2021obata} for another example.

In this paper, we give a sufficient condition for a metric to be a (unique) type II Yamabe metric in its conformal class on a compact connected manifold with boundary (see Theorem \ref{thm1}), and investigate Obata-type theorems for type II Yamabe metrics (see Section \ref{section-obata} and \ref{section-nonumbilic}).
We also prove the corresponding results in the CR case (see Section \ref{section-cr}).

\section{Type II Yamabe metrics}\label{section-2}

Let $M$ be a compact, connected, $n$-dimensional smooth manifold
with boundary $\partial M$, where $n\geq 3$.
Let $g$ be a Riemannian metric in $M$,
and $[g]$   the conformal class of $g$.
We consider the functional $\mathcal{E}$
defined on the space of Riemannian metrics $\mathcal{M}$:
$$\mathcal{E}(g):\mathcal{M}\to\mathbb{R},~~g\mapsto\mathcal{E}(g):=\frac{\int_MR_gdV_g+2\int_{\partial M}H_gdA_g}{\mbox{Vol}_g(\partial M)^{\frac{n-2}{n-1}}},$$
where
$R_g$, $H_g$, $dV_g$, $dA_g$, $\mbox{Vol}_g(\partial M)$
denote respectively the scalar curvature of $g$, the mean curvature of $g$,
the volume measure of $g$ in $M$,
the volume measure of $g$ on $\partial M$, and the volume of $\partial M$ with respect to $g$.
In particular, the mean curvature $H_{g}$ here is defined as the trace of the second fundamental form of $(\partial M, g)$ (which is not divided by $n-1$).
We define \textit{the type II Yamabe constant} $Y_{II}(M, \partial M, [g])$ by
\[
Y_{II}(M, \partial M, [g]) := \inf_{h \in [g]} \mathcal{E}(h).
\]
\begin{rema}
  As pointed out in \cite{escobar1994addendum}, $Y_{II}(M, \partial M, [g])$ can be $-\infty$.
  Indeed, it is equal to $-\infty$ if the first eigenvalue of the conformal Laplacian of $g$ with respect to Dirichlet boundary condition is negative (see \cite{escobar1994addendum}).
\end{rema}
A metric $\tilde{g}\in [g]$ is
a \textit{Yamabe metric of type II} (or a \textit{type II Yamabe metric})
if $Y_{II}(M, \partial M, [g]) > -\infty$ and $\tilde{g}$ is a minimizer of $\mathcal{E}|_{[g]}$, i.e., $\mathcal{E}(\tilde{g}) = Y_{II}(M, \partial M, [g]) > -\infty$.
If $\tilde{g}$ is a type II Yamabe metric on $M$, then $R_{\tilde{g}} \equiv 0$ in $M$ and $H_{\tilde{g}} \equiv \frac{1}{2} \mathrm{Vol}(\partial M, \tilde{g})^{-\frac{1}{n-1}} Y_{II}(M, \partial M, [\tilde{g}])$ on $\partial M$ (see Proposition \ref{prop-crit}).
We define the functional $Q_{g}: W^{1,2}(M) \rightarrow \mathbb{R}$ by
\[
Q_{g}(\phi) := \frac{\int_{M} \frac{4(n-1)}{n-2}|\nabla \phi|^{2} + R_{g} \phi^{2}\, dV_{g} + 2 \int_{\partial M} H_{g} \phi^{2}\, dA_{g}}{(\int_{\partial M} |\phi|^{\frac{2(n-1)}{n-2}} dA_{g})^{\frac{n-2}{n-1}}}.
\]
Escobar \cite{escobar1992conformal} initially considered this functional ($Q_{g}$ is just $Q$ in \cite{escobar2004conformal} multiplied by $\frac{4(n-1)}{n-2}$) to address the following problem:
\begin{prob}[Yamabe Problem with boundary]
  Let $(M^{n}, g)$ be an $n$-dimensional, compact connected Riemannian manifold with boundary $\partial M$, $n \ge 3$.
  Is there a conformally related metric $\tilde{g}$ with zero scalar curvature and boundary of constant mean curvature?
\end{prob}
Escobar \cite{escobar1992conformal}, Marques \cite{marques2005existence, marques2007conformal} and Almaraz \cite{almaraz2010existence} gave affirmative answers for a large class of Riemannian manifolds.
It is now well-known that the type II Yamabe constant can be characterized as follows:

\begin{prop}\label{prop-equivalence} There holds
  \[
  \begin{split}
    Y_{II}(M, \partial M, [g]) &= \inf \{Q_{g}(\phi) : \phi \in C^{\infty}(M),~\phi > 0~\mathrm{on}~M \} \\
    &= \inf \{ Q_{g}(\phi) : \phi \in W^{1,2}(M),~\phi \neq 0~\mathrm{on}~\partial M \}.
  \end{split}
  \]
  Moreover, if $R_{g} = 0$ in $M$, then
  \[
  Y_{II}(M, \partial M, [g]) = \inf \{Q_{g}(\phi) : \phi \in C^{\infty}(M),~\phi > 0~\mathrm{on}~M,~\Delta_{g} \phi = 0~\mathrm{in}~M \}.
  \]
\end{prop}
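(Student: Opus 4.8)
The plan is to reduce everything to the conformal transformation law for $\mathcal{E}$. Writing a competing metric as $\tilde g = u^{4/(n-2)}g$ with $u \in C^{\infty}(M)$, $u>0$, I would first record the standard conformal change formulas
\[
R_{\tilde g} = u^{-\frac{n+2}{n-2}}\Big(-\tfrac{4(n-1)}{n-2}\Delta_g u + R_g u\Big), \qquad H_{\tilde g} = u^{-\frac{n}{n-2}}\Big(\tfrac{2(n-1)}{n-2}\,\partial_\nu u + H_g u\Big),
\]
together with $dV_{\tilde g}=u^{\frac{2n}{n-2}}\,dV_g$ and $dA_{\tilde g}=u^{\frac{2(n-1)}{n-2}}\,dA_g$; here the factor $\tfrac{2(n-1)}{n-2}$ (rather than $\tfrac{2}{n-2}$) reflects that $H_g$ is the full trace of the second fundamental form. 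Substituting into $\mathcal{E}(\tilde g)$ and integrating the Laplacian term by parts with the outward normal $\nu$, the boundary contribution $-\tfrac{4(n-1)}{n-2}\int_{\partial M}u\,\partial_\nu u$ coming from the divergence theorem cancels exactly against the matching term produced by $H_{\tilde g}$, leaving $\mathcal{E}(\tilde g)=Q_g(u)$. Since every metric in $[g]$ arises this way and conversely, the first equality $Y_{II}(M,\partial M,[g])=\inf\{Q_g(\phi):\phi\in C^{\infty}(M),\ \phi>0\}$ is immediate.

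For the second equality I would argue by density. The inequality $\le$ is free, since positive smooth functions lie in $\{\phi\in W^{1,2}(M):\phi|_{\partial M}\not\equiv 0\}$. For $\ge$, take any $\phi$ in the larger class; because $|\nabla|\phi||=|\nabla\phi|$ a.e.\ we may replace $\phi$ by $|\phi|\ge 0$ without changing $Q_g$, then mollify to obtain $\phi_k\in C^{\infty}(M)$, $\phi_k\ge 0$, with $\phi_k\to|\phi|$ in $W^{1,2}(M)$, and finally set $\psi_k=\phi_k+k^{-1}>0$. The crux is that $Q_g$ is continuous along this sequence: the gradient term and the interior zeroth-order term converge since $\psi_k\to|\phi|$ in $W^{1,2}(M)$ and $R_g$ is bounded; the boundary term $\int_{\partial M}H_g\psi_k^2$ converges by the (compact) trace embedding $W^{1,2}(M)\hookrightarrow L^2(\partial M)$; and the denominator converges by the continuity of the critical trace embedding $W^{1,2}(M)\hookrightarrow L^{2(n-1)/(n-2)}(\partial M)$, the assumption $\phi|_{\partial M}\not\equiv 0$ keeping it bounded away from zero. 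As each $\psi_k$ is positive and smooth, $Q_g(\psi_k)\ge Y_{II}$, and passing to the limit gives $Q_g(\phi)\ge Y_{II}$.

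For the third equality, assume $R_g\equiv 0$. Then both the denominator and the boundary term $2\int_{\partial M}H_g\phi^2$ depend only on $\phi|_{\partial M}$, so for fixed boundary data minimizing $Q_g$ amounts to minimizing the Dirichlet energy $\int_M|\nabla\phi|^2$. Given positive smooth $u$, let $v$ solve $\Delta_g v=0$ in $M$ with $v|_{\partial M}=u|_{\partial M}$: by elliptic regularity $v$ is smooth up to the boundary, by the maximum principle $v\ge\min_{\partial M}u>0$, and by the Dirichlet principle $\int_M|\nabla v|^2\le\int_M|\nabla u|^2$. Hence $v$ is admissible in the harmonic class with $Q_g(v)\le Q_g(u)$, giving $\inf_{\Delta_g\phi=0}Q_g(\phi)\le Y_{II}$; the reverse inequality is trivial as harmonic competitors form a subclass.

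I expect the second equality to be the main obstacle. The delicate point is continuity of the denominator along the approximating sequence: the exponent $2(n-1)/(n-2)$ is precisely the critical trace exponent, where the embedding is continuous but \emph{not} compact, so one must invoke the trace Sobolev inequality rather than compactness and keep the boundary trace uniformly bounded away from zero so that $Q_g$ is well defined and continuous there. The conformal computation and the harmonic-replacement step are, by comparison, routine applications of the divergence theorem and of the Dirichlet and maximum principles.
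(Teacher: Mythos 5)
Your proposal is correct and takes essentially the same route as the paper: the first equality via the conformal transformation law $\mathcal{E}\bigl(u^{4/(n-2)}g\bigr)=Q_g(u)$, the second via replacing $\phi$ by $|\phi|$, approximating by smooth nonnegative functions, and shifting by a small positive constant, and the third via harmonic extension combined with the maximum principle and the Dirichlet principle. The only cosmetic difference is in the second equality, where you argue by a direct limit (justifying continuity of $Q_g$ through the boundedness of the trace embeddings), whereas the paper packages the same density-plus-continuity ingredients into a proof by contradiction.
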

\begin{proof}
  The first equality follows immediately from the definition.
  The second equality can be proven as follows:
  For any $\phi \in W^{1,2}(M)$ with $\phi \neq 0$ on $\partial M$, $|\phi| \in W^{1,2}(M)$ and $Q(|\phi|) \le Q(\phi)$.
  Hence
  \[
  \begin{split}
  &\inf \{ Q_{g}(\phi) : \phi \in W^{1,2}(M),~\phi \neq 0~\mathrm{on}~\partial M \} \\
  &= \inf \{ Q_{g}(\phi) : \phi \in W^{1,2}(M),~\phi \ge 0~\mathrm{on}~M,~\phi \neq 0~\mathrm{on}~\partial M \}.
    \end{split}
  \]
  In order to prove the second equality, it is enough to prove the following,
  \[
  \begin{split}
   \inf_{W^{1,2}} Q_{g} &:=  \inf \{ Q_{g}(\phi) : \phi \in W^{1,2}(M),~\phi \ge 0~\mathrm{on}~M,~\phi \neq 0~\mathrm{on}~\partial M \} \\
    &\ge \inf \{Q_{g}(\phi) : \phi \in C^{\infty}(M),~\phi > 0~\mathrm{on}~M \} =: \inf_{C^{\infty}} Q_{g}.
  \end{split}
  \]
  Since $\displaystyle\inf_{C^{\infty}} Q_{g} \ge \inf_{W^{1,2}} Q_{g}$, we have $\displaystyle\inf_{W^{1,2}} Q_{g} = -\infty$ whenever $\displaystyle\inf_{C^{\infty}} Q_{g} = -\infty$. In particular, the second equality holds in this case.
  Hence, we may assume that $\displaystyle\inf_{C^{\infty}} Q_{g}$ is finite.
  Suppose that $\displaystyle-\infty < \inf_{W^{1,2}} Q_{g} < \inf_{C^{\infty}} Q_{g}$.
  Then we can take a function $\phi \in W^{1,2}(M)$ with $\phi \ge 0$ on $M$ and $\phi \neq 0$ on $\partial M$ such that
  $$Q_{g}(\phi) \le \inf_{W^{1,2}} Q_{g} + \frac{1}{2} (\inf_{C^{\infty}} Q_{g} - \inf_{W^{1,2}} Q_{g}).$$
  Since $C^{\infty}(M)$ is dense in $W^{1,2}(M)$, there is a smooth nonnegative function
  $\tilde{\psi} \in C^{\infty}(M)$ such that $$Q_{g}(\tilde{\psi}) \le Q_{g}(\phi) - \frac{1}{8} (\inf_{C^{\infty}} Q_{g} - \inf_{W^{1,2}} Q_{g}).$$
  Moreover, from the continuity of the functional $Q_{g}$, there is a positive constant $a > 0$ such that
  $$Q_{g}(\tilde{\psi} + a) \le Q_{g}(\tilde{\psi}) - \frac{1}{8} (\inf_{C^{\infty}} Q_{g} - \inf_{W^{1,2}} Q_{g}).$$
Summing these up, we obtain that
\[
\inf_{C^{\infty}} Q_{g} \le Q_{g}(\tilde{\psi} + a) \le \inf_{W^{1,2}} Q_{g} + \frac{1}{4} (\inf_{C^{\infty}} Q_{g} - \inf_{W^{1,2}} Q_{g}) < \inf_{C^{\infty}} Q_{g},
\]
which is a contradiction.
When $\displaystyle\inf_{W^{1,2}} Q_{g} = -\infty$, we can take a function $\phi \in \{ Q_{g}(\phi) : \phi \in W^{1,2}(M),~\phi \ge 0~\mathrm{on}~M,~\phi \neq 0~\mathrm{on}~\partial M \}$ such that $Q_{g}(\phi) \ll \inf_{C^{\infty}} Q_{g}$, since we have assumed that $\inf_{C^{\infty}} Q_{g}$ is finite.
Therefore, this leads to a contradiction in the same way as in the previous case.
This completes the proof of the second equality.

Finally, we prove the last equality when $R_{g} = 0$ in $M$.
To this end, it is enough to show that
\begin{equation}\label{eq-claim}
  \begin{split}
  &\inf\{Q_{g}(\phi) : \phi \in C^{\infty}(M),~\phi > 0~\mathrm{on}~M \} \\
    &\ge \inf \{Q_{g}(\phi) : \phi \in C^{\infty}(M),~\phi > 0~\mathrm{on}~M,~\Delta_{g} \phi = 0~\mathrm{in}~M \}.
\end{split}
\end{equation}
Let $\phi \in \{\phi \in C^{\infty}(M),~\phi > 0~\mathrm{on}~M \}$ and $\phi_{harm}$ is the harmonic extension of $\phi|_{\partial M}$, that is, it satisfies that $\Delta_{g} \phi_{harm} = 0$ in $M$ and $\phi_{harm} |_{\partial M} = \phi|_{\partial M}$.
Then it follows that $\phi_{harm} > 0$ on $M$ by the maximum principle.
Moreover, since $\Delta_{g} \phi_{harm} = 0$ in $M$ and $\phi_{harm} |_{\partial M} = \phi|_{\partial M}$, we have
\[
\int_{M} \langle \nabla \phi_{harm}, \nabla(\phi - \phi_{harm}) \rangle\, dV_{g} = 0
\]
by integration by parts.
Therefore we have
\[
\begin{split}
  \int_{M} |\nabla \phi|^{2}\, dV_{g} &= \int_{M} |\nabla(\phi - \phi_{harm}) + \nabla \phi_{harm}|^{2}\, dV_{g} \\
  &= \int_{M} |\nabla(\phi - \phi_{harm})|^{2}\, dV_{g} + \int_{M} |\nabla \phi_{harm}|^{2}\, dV_{g} \\
&\ge \int_{M} |\nabla \phi_{harm}|^{2}\, dV_{g}.
\end{split}
\]
Hence, from the definition of the energy $Q_{g}$ and the assumption that $R_{g} = 0$ in $M$, we obtain the desired inequality (\ref{eq-claim}).
\end{proof}

It follows from  {\cite[Proposition 1.4]{escobar1992conformal}} that, if $Y_{II} (M, \partial M, [g]) > -\infty$, then there always exists a scalar-flat metric with boundary of constant mean curvature in $[g]$. Hence, by the conformal invariance of $Y_{II}(M, \partial M, [g])$ and Proposition \ref{prop-equivalence}, we can always assume that $g$ is scalar-flat when we consider its type II Yamabe constant $Y_{II}(M, \partial M, [g])$, suppose that it is finite.

It also follows from {\cite[Proposition 2.1]{escobar1992conformal}} and \cite{escobar1994addendum} that,
if $Y_{II}(M, \partial M, [g])$ is finite and $Y_{II}(M^{n}, \partial M, [g]) < Y_{II}(\mathbb{B}^{n}, \partial \mathbb{B}^{n}, [\delta])$, then there is a type II Yamabe metric $g_{0}$ in the conformal class $[g]$.
Here, $\delta$ denotes the Euclidean metric restricted on the unit $n$-ball $\mathbb{B}^{n} (\subset \mathbb{R}^{n})$.

On the other hand, $Y_{II} (M, \partial M, [g])$ is finite whenever  $R_{g} \ge 0$.
Indeed, if $R_{g} \ge 0$, then we have for all $\phi \in W^{1,2}(M)$ with $\phi \neq 0$ on $\partial M$
\[
\begin{split}
  Q_{g}(\phi) &\ge \frac{2 \int_{\partial M} H_{g} \phi^{2}\, dA_{g}}{(\int_{\partial M} |\phi|^{\frac{2(n-1)}{n-2}} dA_{g})^{\frac{n-2}{n-1}}} = \frac{2 \int_{\partial M} (H_{g}^{+} - H_{g}^{-}) |\phi|^{2}\, dA_{g}}{(\int_{\partial M} |\phi|^{\frac{2(n-1)}{n-2}} dA_{g})^{\frac{n-2}{n-1}}} \\
  &\ge \frac{-2 \int_{\partial M} H^{-}_{g} |\phi|^{2}\, dA_{g}}{(\int_{\partial M} |\phi|^{\frac{2(n-1)}{n-2}} dA_{g})^{\frac{n-2}{n-1}}} \\
  &\ge \frac{-2 \|H^{-}_{g}\|_{L^{n-1}} \|\phi\|_{L^{\frac{n-1}{n-2}}(\partial M)}}{(\int_{\partial M} |\phi|^{\frac{2(n-1)}{n-2}} dA_{g})^{\frac{n-2}{n-1}}} = -2||H^{-}_{g}||_{L^{n-1}} > -\infty,
\end{split}
\]
where $H^+_{g}=\max\{H_g,0\}$ denotes the positive part of $H_g$ and
$H^-_{g}=\max\{-H_g,0\}$ negative part of $H_{g}$. Here, we have used H\"{o}lder's inequality in the last inequality.

\begin{prop}\label{prop-crit}
  Given a metric $g$ on $M^{n}~(n \ge 3)$, if $u$ is a smooth critical point of the energy $Q_{g}$, then the metric $\bar{g} := u^{\frac{4}{n-2}} g$ satisfies $R_{\bar{g}} = 0$ in $M$ and $H_{\bar{g}} \equiv \mathrm{const}$ on $\partial M$.
  In particular, if $g$ is a type II Yamabe metric on $M$, then $R_{g} \equiv 0$ in $M$ and $H_{g} \equiv \frac{1}{2} \mathrm{Vol}(\partial M, g)^{-\frac{1}{n-1}} Y_{II}(M, \partial M, [g])$ on $\partial M$.
\end{prop}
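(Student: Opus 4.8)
The plan is to derive the Euler--Lagrange equation of $Q_g$ by computing its first variation at the critical point $u$, and then to translate the resulting boundary value problem into geometric statements about $\bar g = u^{4/(n-2)}g$ via the standard conformal transformation laws for scalar and mean curvature. Throughout I fix the sign convention $\Delta_g = \mathrm{div}\,\nabla$ and let $\nu$ denote the outward unit normal.

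First I would write $Q_g(\phi) = N(\phi)/D(\phi)$ with $N(\phi) = \int_M (\frac{4(n-1)}{n-2}|\nabla\phi|^2 + R_g\phi^2)\,dV_g + 2\int_{\partial M}H_g\phi^2\,dA_g$ and $D(\phi) = (\int_{\partial M}|\phi|^{2(n-1)/(n-2)}\,dA_g)^{(n-2)/(n-1)}$, and differentiate $t\mapsto Q_g(u+t\psi)$ at $t=0$ for arbitrary $\psi\in C^\infty(M)$. Since $u>0$ the denominator is smooth in $t$ near $0$ and we may drop the absolute values. Vanishing of the derivative gives $N'(u)\psi = \lambda\,D'(u)\psi$ with $\lambda = Q_g(u)$, and after an integration by parts (Green's formula) this reads
\[
\frac{8(n-1)}{n-2}\Big(-\int_M(\Delta_g u)\psi\,dV_g + \int_{\partial M}(\partial_\nu u)\psi\,dA_g\Big) + 2\int_M R_g u\psi\,dV_g + 4\int_{\partial M}H_g u\psi\,dA_g = 2\Lambda\int_{\partial M}u^{n/(n-2)}\psi\,dA_g
\]
for all $\psi$, where $\Lambda = \lambda(\int_{\partial M}u^{2(n-1)/(n-2)}\,dA_g)^{-1/(n-1)}$.

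Next I would extract the two equations by choosing test functions suitably. Taking $\psi$ compactly supported in the interior kills every boundary integral and yields $-\frac{4(n-1)}{n-2}\Delta_g u + R_g u = 0$ in $M$; by the conformal transformation law $R_{\bar g} = u^{-(n+2)/(n-2)}(-\frac{4(n-1)}{n-2}\Delta_g u + R_g u)$ this is precisely $R_{\bar g}\equiv 0$. Feeding this interior identity back and letting $\psi|_{\partial M}$ range over all smooth boundary functions leaves
\[
\frac{2(n-1)}{n-2}\partial_\nu u + H_g u = \frac{\Lambda}{2}\,u^{n/(n-2)} \quad\text{on }\partial M.
\]
Comparing with the transformation law $H_{\bar g} = u^{-n/(n-2)}(\frac{2(n-1)}{n-2}\partial_\nu u + H_g u)$ for the unnormalized trace $H$ gives $H_{\bar g}\equiv \Lambda/2$, a constant, proving the first assertion.

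Finally, for the second assertion I would observe that $Q_g(1) = \mathcal{E}(g)$, so if $g$ is a type II Yamabe metric then the constant $\phi\equiv 1$ realizes $\inf Q_g = Y_{II}(M,\partial M,[g])$ and is therefore a critical point. Applying the computation with $u\equiv 1$ (so $\bar g = g$) gives $R_g\equiv 0$ and, since now $\lambda = Y_{II}(M,\partial M,[g])$ and $\int_{\partial M}u^{2(n-1)/(n-2)}\,dA_g = \mathrm{Vol}_g(\partial M)$, the constant value $H_g \equiv \Lambda/2 = \frac{1}{2}\mathrm{Vol}(\partial M,g)^{-1/(n-1)}Y_{II}(M,\partial M,[g])$, as claimed. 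I expect the only delicate point to be bookkeeping: keeping the sign convention for $\Delta_g$ consistent across the two conformal identities and collecting the dimensional constants so that the boundary value comes out as exactly $\Lambda/2$; the analysis itself is a routine first-variation argument, with positivity of $u$ guaranteeing differentiability of the denominator.
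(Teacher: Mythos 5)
Your proposal is correct and follows essentially the same route as the paper: compute the first variation of $Q_g$ at $u$, extract the interior equation $-\tfrac{4(n-1)}{n-2}\Delta_g u + R_g u = 0$ and the boundary equation $\tfrac{2(n-1)}{n-2}\partial_\nu u + H_g u = \tfrac{\Lambda}{2}u^{n/(n-2)}$ by localizing test functions, convert these into $R_{\bar g}\equiv 0$ and $H_{\bar g}\equiv \mathrm{const}$ via the conformal transformation laws, and then specialize to $u\equiv 1$ (using $Q_g(1)=\mathcal{E}(g)$) for the type II Yamabe assertion. Your constant $\Lambda/2$ coincides with the paper's constant $c$, so the bookkeeping checks out.
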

\begin{rema}
  Note that the boundary $\partial M$ is not necessarily connected, even we have assumed that $M$ is connected.
  In particular, Proposition \ref{prop-crit}   asserts that each connected component of $\partial M$ has constant mean curvature of the same value.
\end{rema}
\begin{proof}
  Let $u \in \{ \phi \in C^{\infty}(M):\phi > 0~\mathrm{on}~M \}$ be a critical point of the energy $Q_{g}$.
From the first variation of $Q_{g}$ at $u$, we find
  \[
  \begin{split}
 0= &\frac{2 \int_{M} \frac{4(n-1)}{n-2} \langle \nabla \phi, \nabla u \rangle + R_{g} \phi \cdot u\, dV_{g} + 4 \int_{\partial M} H_{g} \phi \cdot u\, dA_{g}}{(\int_{\partial M} |\phi|^{\frac{2(n-1)}{n-2}} dA_{g})^{\frac{n-2}{n-1}}} \\
  &-2\, \frac{\int_{M} \frac{4(n-1)}{n-2}|\nabla u|^{2} + R_{g} u^{2}\, dV_{g} + 2 \int_{\partial M} H_{g} u^{2}\, dA_{g}}{(\int_{\partial M} u^{\frac{2(n-1)}{n-2}} dA_{g})^{\frac{n-2}{n-1}}} \cdot \frac{\int_{\partial M} \phi \cdot u^{\frac{n}{n-2}}\, dA_{g}}{\int_{\partial M} u^{\frac{2(n-1)}{n-2}} dA_{g}}
    \end{split}
  \]
  for all $\phi \in C^{\infty}(M)$.
  From this, we get
  \[
  \begin{split}
  -\frac{4(n-1)}{n-2}\Delta_{g}u+R_{g}u&= 0~~\mbox{ in }M,\\
\frac{2(n-1)}{n-2}\frac{\partial u}{\partial\nu_{g}}+H_{g}u&=c u^{\frac{n}{n-2}}~~\mbox{ on }\partial M,
    \end{split}
  \]
  where $c := \frac{1}{2} ( \int_{M} \frac{4(n-1)}{n-2}|\nabla u|^{2} + R_{g} u^{2}\, dV_{g} + 2 \int_{\partial M} H_{g} u^{2}\, dA_{g}) (\int_{\partial M} u^{\frac{2(n-1)}{n-2}} dA_{g})^{-1}$.
  Here, $\frac{\partial}{\partial \nu_{g}}$ denotes the outward normal derivative with respect to $g$.
  Then, from the formula of the scalar curvature and the mean curvature under the conformal change of metrics,  we obtain that $R_{\bar{g}} = 0$ in $M$ and $H_{\bar{g}} \equiv c$ on $\partial M$, where $\bar{g} = u^{\frac{4}{n-2}} g$.
  In particular, when $g$ itself is a type II Yamabe metric, $\phi \equiv 1$ is a critical point of $Q_{g}$,
  and the last assertion follows from this and the definition of $Y_{II}(M, \partial M, [g])$.
\end{proof}

\begin{prop}\label{prop-nonpositive}
Let $(M^{n}, g)$ be a compact connected Riemannian manifold with boundary of dimension $n \ge 3$.\\
(1) Suppose that $h, \bar{h}\in [g]$  are both scalar-flat
and their mean curvatures on the boundary $\partial M$ are
nonpositive constant of same sign (i.e.  $H_h$, $H_{\bar{h}}$ are both negative
constants, or $H_h=H_{\bar{h}}=0$).
Then $h$ is equal to $\bar{h}$ up to rescaling.\\
(2) If $-\infty < Y_{II}(M^{n}, \partial M^{n}, [g]) \le 0$, then there is a unique (up to rescaling) type II Yamabe metric $g_{0}$ in $[g]$.\\
Here, $g$ is equal to $\bar{g}$ up to rescaling means that $g = c \cdot \bar{g}$ for some positive constant $c$.
\end{prop}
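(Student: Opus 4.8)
The plan is to prove part (1) directly by the maximum principle and Hopf's lemma, and then obtain part (2) by combining part (1) with the existence statement quoted right after the proof of Proposition \ref{prop-equivalence}.

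For part (1), since $h,\bar h\in[g]$ I would write $\bar h=u^{\frac{4}{n-2}}h$ for a smooth function $u>0$. The conformal transformation laws for the scalar and mean curvatures (the same identities underlying the Euler--Lagrange system in the proof of Proposition \ref{prop-crit}) read
\begin{equation*}
-\frac{4(n-1)}{n-2}\Delta_h u+R_h u=R_{\bar h}\,u^{\frac{n+2}{n-2}}\ \text{ in }M,\qquad
\frac{2(n-1)}{n-2}\frac{\partial u}{\partial\nu_h}+H_h u=H_{\bar h}\,u^{\frac{n}{n-2}}\ \text{ on }\partial M.
\end{equation*}
Because $R_h=R_{\bar h}=0$, the first identity says exactly that $u$ is harmonic, $\Delta_h u=0$ in $M$. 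Setting $a:=H_h\le 0$ and $b:=H_{\bar h}\le 0$ (constants of the same sign), the boundary condition becomes $\frac{2(n-1)}{n-2}\,\partial u/\partial\nu_h=b\,u^{\frac{n}{n-2}}-a\,u$.

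I then claim $u$ is constant. Suppose not. Since $u$ is harmonic and $M$ is compact and connected, the strong maximum principle forces $u$ to attain its maximum $u_{\max}$ and its minimum $u_{\min}<u_{\max}$ only on $\partial M$, say at $p$ and $q$. Hopf's lemma (the interior ball condition holds since $\partial M$ is smooth) gives $\partial u/\partial\nu_h(p)>0$ and $\partial u/\partial\nu_h(q)<0$. Substituting into the boundary condition and dividing by the positive boundary value yields
\begin{equation*}
b\,u_{\min}^{2/(n-2)}<a<b\,u_{\max}^{2/(n-2)}.
\end{equation*}
When $a=b=0$ this is the impossible chain $0<0$; when $a,b<0$ it forces $b\,u_{\min}^{2/(n-2)}<b\,u_{\max}^{2/(n-2)}$, hence $u_{\min}^{2/(n-2)}>u_{\max}^{2/(n-2)}$ after dividing by $b<0$, contradicting $u_{\min}<u_{\max}$. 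Therefore $u$ is constant and $\bar h=c\,h$ with $c=u^{\frac{4}{n-2}}>0$, which proves (1). (In the minimal case $a=b=0$ one may alternatively integrate $\Delta_h u=0$ against $u$ to obtain $\int_M|\nabla u|_h^2\,dV_h=0$ directly.)

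For part (2), conformal invariance together with the classical fact that $Y_{II}(\mathbb B^n,\partial\mathbb B^n,[\delta])>0$ gives $Y_{II}(M,\partial M,[g])\le 0<Y_{II}(\mathbb B^n,\partial\mathbb B^n,[\delta])$, so the existence result quoted from \cite{escobar1992conformal, escobar1994addendum} produces a type II Yamabe metric $g_0\in[g]$. For uniqueness, Proposition \ref{prop-crit} shows that every type II Yamabe metric is scalar-flat with boundary mean curvature equal to the constant $\frac12\mathrm{Vol}(\partial M,\cdot)^{-1/(n-1)}Y_{II}(M,\partial M,[g])$, which is $\le 0$; hence any two such metrics have mean curvatures that are nonpositive constants of the same sign, and part (1) shows they agree up to rescaling. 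The main obstacle is the negative-mean-curvature case of part (1): the boundary condition is genuinely nonlinear in $u$, so constancy cannot be read off from a single integration, and the sign bookkeeping in the Hopf-lemma inequalities must be tracked carefully (in particular that $b<0$ reverses the inequality). The remaining steps are routine manipulations with the conformal change formulas and an appeal to the quoted existence theorem.
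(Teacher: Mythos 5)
Your proof is correct and follows essentially the same route as the paper: write $\bar h=u^{4/(n-2)}h$, use the conformal change formulas to get harmonicity of $u$ plus the nonlinear boundary condition, apply the maximum principle at boundary extrema to force $u$ constant, and then deduce (2) from Escobar's existence theorem together with Proposition \ref{prop-crit} and part (1). The only (cosmetic) difference is organizational: the paper first rescales so that $H_{\bar h}=H_h$ and then squeezes $u$ using the non-strict inequality $\partial u/\partial\nu_h\ge 0$ at the maximum, whereas you skip the normalization and instead run a contradiction argument with the strict Hopf lemma, which handles the negative and zero mean curvature cases uniformly.
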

\begin{proof}
\underline{(1)}:
We first consider the case when $H_h$, $H_{\bar{h}}$ are both negative.
  Since $H_{h}$ and $H_{\bar{h}}$ are negative constants, we can take a positive constant $c > 0$ such that
  $H_{c \bar{h}} = c^{-1/2} H_{\bar{h}} = H_{h}$ on $\partial M$.
  Replacing $\bar{h}$ by $c\bar{h}$, we may assume that
\begin{equation}\label{4}
H_{\bar{h}}= H_{h}~~\mbox{ on }\partial M.
\end{equation}
  Since $h$ and $\bar{h}$  are in the same conformal class $[g]$, there is a smooth positive function $u$ on $M$ such that $\bar{h} = u^{\frac{4}{n-2}} h$.
  The goal is showing that $u \equiv \mathrm{const} > 0$ on $M$.
  From the conformal changing formula of the scalar curvatures and the assumptions that $R_{h}=R_{c\bar{h}}=0$, we have that $\Delta_{h} u = 0$ in $M$.
  Take a point $x_{0} \in M$ where the function $u$ attains the maximum, i.e., $u(x_{0}) = \max_{M} u.$
  Then $u(x_{0}) = \max_{M} u = \max_{\partial M} u$ and $\frac{\partial u}{\partial \nu_{h}} (x_{0}) \ge 0$ by the Hopf maximum principle.
  Here, $\frac{\partial}{\partial \nu_{h}}$ denotes the outward normal derivative with respect to $h$.
    From (\ref{4}) and the conformal changing formula of mean curvatures, we have
  \[
  \frac{2(n-1)}{n-2} \frac{\partial u}{\partial \nu_{h}}(x_{0}) + H_{h} \cdot u(x_{0}) = H_{\bar{h}} \cdot u(x_{0})^{\frac{n}{n-2}}~\mathrm{on}~\partial M.
  \]
  Hence we have $H_{\bar{h}} (u(x_{0})^{\frac{n}{n-2}} - c^{-1/2} u(x_{0})) \ge 0$ on $\partial M$. Since $H_{\bar{h}} < 0$, we have $u(x_{0})^{\frac{2}{n-2}} \le c^{-1/2}$.
  Similarly, if a point $y_{0} \in M$ where $u$ attains the minimum, then $u(y_{0}) = \min_{\partial M} u$ and $c^{-1/2} \le u(y_{0})^{\frac{2}{n-2}}$.
  Therefore we obtain that $u \equiv c^{-1/2}$ on $M$.

We next consider the case when $H_h=H_{\bar{h}}=0$.
By the same argument as above, we have
  \[
  \Delta_{h} u = 0~\mathrm{in}~M~\mathrm{and}~\frac{\partial u}{\partial \nu_{h}} = 0~\mathrm{on}~\partial M.
  \]
  By integration by parts, we have $\int_{M} |\nabla_{h} u|^{2}\, dV_{h} = 0.$
  Hence, $u \equiv \mathrm{const} > 0$ on $M$, since $M$ is connected.

\noindent
  \underline{(2)}:
  Since $-\infty < Y_{II}(M^{n}, \partial M^{n}, [g]) \le 0 < Y_{II}(\mathbb{B}^{n}, \partial \mathbb{B}^{n}, [\delta])$, from \cite{escobar1992conformal} (see also \cite{escobar1994addendum}), there is a type II Yamabe metric $g_{0} \in [g].$
  Since a type II Yamabe metric is a scalar-flat metric with boundary of constant non-positive mean curvature (by Proposition \ref{prop-crit}), the uniqueness (up to rescaling) follows from $(1)$.
\end{proof}
On the other hand, for the case of $Y_{II}(M, \partial M, [g]) > 0$, little is known about the uniqueness.
In \cite{escobar1990uniqueness}, Escobar classified all positive solutions of the following equation:
\[
\begin{cases}
  \Delta_{\delta} u = 0~~\mathrm{in}~\mathbb{B}^{n}, \\
  \frac{\partial u}{\partial \nu_{\delta}} + \frac{n-2}{2} u = \left( \frac{n-2}{2} \right)^{2} u^{\frac{n}{n-2}}~~\mathrm{on}~\mathbb{S}^{n-1}.
\end{cases}
\]
Here, $\delta$ denotes the Euclidean metric on the closed unit ball $\mathbb{B}^{n}$ and $\frac{\partial}{\partial \nu_{\delta}}$ denotes the outward normal derivative with respect to $\delta$.
Indeed, all positive solutions are written as the form
\[
u_{a}(x) = \left[ \frac{2}{n-2} \cdot \frac{1-|a|^{2}}{1 + |a|^{2}|x|^{2} - 2 x \cdot a} \right]^{\frac{n-2}{2}}
\]
for some $a \in \mathbb{B}^{n}$.
C{\'a}rdenas--Sierra \cite{cardenas2019uniqueness} showed that every positive conformal class sufficiently close to a non-degenerate metric contains unique scalar-flat metric with boundary of constant mean curvature up to rescaling.
Here, a metric $g$ on a compact $n$-manifold $M^{n}$ with boundary of constant mean curvature $H_{g}$ is called \textit{non-degenerate} if either $H_{g} = 0$ or $\frac{H_{g}}{n-1}$ is not a Steklov eigenvalue.
Note here that the Euclidean metric $\delta$ on $\mathbb{B}^{n}$ is not non-degenerate by {\cite[Section 1.3]{girouard2017spectral}}.

The following is our main theorem, which provides a way to determine whether a given metric is a type II Yamabe metric or not.
\begin{theorem}\label{thm1}
Let $g$ be a type II Yamabe metric
on a compact, connected, $n$-dimensional smooth manifold
with boundary $\partial M$, where $n\geq 3$,
such that its mean curvature $H_g>0$ on $\partial M$.
Assume that $h$
is a scalar-flat metric in $M$ with positive constant mean curvature
and that $dA_{g} \le C^{\frac{n-1}{2}} dA_{h}$ for a positive constant $C > 0$ such that
\begin{equation}\label{eq-assumption-thm1}
  C \le \min \left\{ \gamma^{-\frac{2}{n-1}},~\left( \frac{H_{h}}{H_{g}} \right)^{\frac{2n-3}{n-2}} \gamma^{\frac{2}{n-2}} \right\},~~\mbox{ or }~~dA_{g} = \left( \frac{H_{h}}{H_{g}} \right)^{\frac{n-1}{2}} dA_{h},
\end{equation}
where $\gamma = \mathrm{Vol}(M, h) \cdot \mathrm{Vol}(M, g)^{-1}$.
If
\begin{equation}\label{1}
H_h  h\leq H_g g~~\mathrm{on}~M,
\end{equation}
then $h$ is also a type II Yamabe metric.
Moreover, if
\begin{equation}\label{2}
H_h  h<H_g g~~\mathrm{on}~M,
\end{equation}
then $h$ is a unique type II Yamabe metric (up to positive constant)
in $[h]_0$, where
$$[h]_0:=\{ u^{\frac{4}{n-1}} h~|~u \in C^{\infty}(M),~u > 0~\mathrm{on}~M,~\Delta_{h} u = 0~\mathrm{in}~M \}.$$
\end{theorem}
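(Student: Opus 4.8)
The plan is to prove that $h$ realizes the infimum of $\mathcal{E}$ in its conformal class by feeding the conformal factors parametrizing $[h]_0$ into the sharp inequality carried by the type II Yamabe metric $g$. Since $g$ is type II Yamabe with $H_g>0$, Proposition~\ref{prop-crit} gives $R_g\equiv0$ and $H_g\equiv\mathrm{const}>0$, so that $\mathcal{E}(g)=2H_g\,\mathrm{Vol}(\partial M,g)^{\frac{1}{n-1}}=Y_{II}(M,\partial M,[g])$ and the minimizing property takes the quantitative form $Q_g(w)\ge\mathcal{E}(g)$ for every admissible $w$. In the same way $h$ is scalar-flat with constant $H_h>0$, whence $\mathcal{E}(h)=2H_h\,\mathrm{Vol}(\partial M,h)^{\frac{1}{n-1}}=Q_h(1)$ and, by the computation of Section~\ref{section-2} valid when $R\ge0$, the constant $Y_{II}(M,\partial M,[h])$ is finite and satisfies $Y_{II}(M,\partial M,[h])\le\mathcal{E}(h)$. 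Thus it suffices to prove the reverse inequality, and by the last assertion of Proposition~\ref{prop-equivalence} (available because $R_h=0$) this reduces to
\[
Q_h(u)\ge\mathcal{E}(h)\qquad\text{for all }u\in C^\infty(M),\ u>0,\ \Delta_h u=0 ,
\]
which says exactly that $h$ has least energy among the scalar-flat metrics constituting $[h]_0$.

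I would next convert the geometric hypotheses into the inequalities that drive the estimate. The pointwise bound $H_h h\le H_g g$ yields, throughout $M$, the reversed cometric inequality $|\nabla_g u|_g^2\le\frac{H_g}{H_h}|\nabla_h u|_h^2$, the volume comparison $dV_h\le(\frac{H_g}{H_h})^{\frac{n}{2}}dV_g$, and on $\partial M$ the area bound $dA_h\le(\frac{H_g}{H_h})^{\frac{n-1}{2}}dA_g$; paired with the standing hypothesis $dA_g\le C^{\frac{n-1}{2}}dA_h$ this controls the density $dA_g/dA_h$ from both sides along the boundary. The core computation is then to insert the harmonic competitor $u$ into $Q_g(u)\ge\mathcal{E}(g)$, rewrite every $g$-quantity appearing there in terms of the corresponding $h$-quantity using these comparisons and the total-volume ratio $\gamma=\mathrm{Vol}(M,h)\,\mathrm{Vol}(M,g)^{-1}$, and check that the exponent bookkeeping collapses to $Q_h(u)\ge\mathcal{E}(h)$. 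I expect the numerical thresholds in (\ref{eq-assumption-thm1}) to be exactly the conditions under which this transfer loses nothing: the bound $C\le\min\{\gamma^{-\frac{2}{n-1}},(\frac{H_h}{H_g})^{\frac{2n-3}{n-2}}\gamma^{\frac{2}{n-2}}\}$ should govern the generic case, while the alternative $dA_g=(\frac{H_h}{H_g})^{\frac{n-1}{2}}dA_h$ is the boundary-scaling case in which the area comparison is an identity.

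The step I anticipate as the principal obstacle is the interior Dirichlet term $\int_M|\nabla_h u|_h^2\,dV_h$. A one-sided metric inequality $H_h h\le H_g g$ scales the cometric and the volume element in opposite directions, so it gives no clean comparison between $\int_M|\nabla_h u|_h^2\,dV_h$ and $\int_M|\nabla_g u|_g^2\,dV_g$ in either direction; reconciling them is precisely where the global ratio $\gamma$ and the two-sided boundary control of $dA_g/dA_h$ must be used in tandem, together with the harmonicity of $u$ to tie the interior gradient to its boundary trace, in order to keep the inequality sharp. Once $Q_h(u)\ge\mathcal{E}(h)$ is in hand, the uniqueness under the strict hypothesis (\ref{2}) should follow by tracking equality: if $H_h h<H_g g$ then the cometric comparison is strict wherever $\nabla u\neq0$, so $Q_h(u)=\mathcal{E}(h)$ can hold only for $\nabla u\equiv0$, i.e.\ for constant $u$; hence $h$ is, up to scaling, the only minimizer in $[h]_0$, in the spirit of the rigidity in Proposition~\ref{prop-nonpositive}.
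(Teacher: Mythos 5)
Your overall strategy coincides with the paper's: reduce, via Proposition \ref{prop-crit} and the last assertion of Proposition \ref{prop-equivalence}, to proving $Q_h(u)\ge\mathcal{E}(h)$ for positive $h$-harmonic $u$; transfer the minimizing property $Q_g(u)\ge\mathcal{E}(g)$ through the pointwise comparisons coming from \eqref{1} and $dA_g\le C^{\frac{n-1}{2}}dA_h$; and obtain uniqueness under \eqref{2} by forcing equality in the gradient comparison. Your derived inequalities (the cometric inequality $\frac{H_h}{H_g}|\nabla_g u|_g^2\le|\nabla_h u|_h^2$, the area bound $dA_h\le(H_g/H_h)^{\frac{n-1}{2}}dA_g$, the two-sided boundary control) are correct and all appear in the paper. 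However, the step you yourself flag as ``the principal obstacle'' --- the interior term $\int_M|\nabla_h u|_h^2\,dV_h$ --- is a genuine gap, and the tools you propose for it (the global ratio $\gamma$, the boundary density bounds, and harmonicity of $u$) cannot close it. What you need is a pointwise lower bound on $dV_h/dV_g$ in the interior; hypothesis \eqref{1} only bounds this ratio from above (by $(H_g/H_h)^{n/2}$), the total-volume ratio $\gamma$ says nothing pointwise, and harmonicity does not help: rewriting $\int_M|\nabla_h u|_h^2\,dV_h$ as a Dirichlet-to-Neumann form on $\partial M$ merely trades the interior comparison for a comparison of Dirichlet-to-Neumann operators, which again depends on the interior geometry of $h$ versus $g$ and is not determined by boundary data. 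Concretely, $dV_h/dV_g$ can be made arbitrarily small on a region where $|\nabla_h u|_h$ concentrates while every hypothesis you invoke remains true, so no inequality of the form $\int_M|\nabla_h u|_h^2\,dV_h\ge c\int_M|\nabla_g u|_g^2\,dV_g$ with the required constant follows from them.

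The missing idea in the paper is a Moser-type normalization via Banyaga's theorem \cite{banyaga1974volume}: since $\int_M\gamma\,dV_g=\mathrm{Vol}(M,h)$, there is a diffeomorphism $\varphi:M\to M$ with $\varphi|_{\partial M}=\mathrm{id}$ and $dV_{\varphi^*h}=\gamma\,dV_g$ \emph{pointwise}. Replacing $h$ by $\varphi^*h$ (which preserves scalar-flatness, the constant $H_h$, the boundary area form because $\varphi$ fixes $\partial M$ pointwise, and the property of being type II Yamabe) one may assume $dV_h=\gamma\,dV_g$ as an identity of volume forms. Only then does the interior estimate $\int_M|\nabla_h u|_h^2\,dV_h=\gamma\int_M|\nabla_h u|_h^2\,dV_g\ge\gamma\,\frac{H_h}{H_g}\int_M|\nabla_g u|_g^2\,dV_g$ go through; after that, your ``exponent bookkeeping'' does collapse exactly as in the paper, giving $Q_h(u)\ge\min\{\gamma,C^{-\frac{n-1}{2}}\}\left(\frac{H_h}{H_g}\right)^{\frac{n}{2}}\mathcal{E}(g)\ge\mathcal{E}(h)$ under \eqref{eq-assumption-thm1}, and the equality analysis yields uniqueness under \eqref{2}. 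Without this normalization step, your outline cannot be completed as written.
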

\begin{rema}
  When $H_{h} \le 0$, $h$ is a unique type II Yamabe metric by Proposition \ref{prop-nonpositive}. So it is enough to consider the case that $H_{h}$ is positive constant when we investigate the uniqueness of type II Yamabe metrics.
\end{rema}

\begin{proof}[Proof of Theorem \ref{thm1}]
From {\cite[TH\'{E}OR\`{E}ME]{banyaga1974volume}}, there is a diffeomorphism $\varphi : M \rightarrow M$ such that $dV_{\varphi^{*} h} = \gamma dV_{g}$ and $\varphi = \mathrm{id}$ on $\partial M$.
Since the scalar curvature and the mean curvature are preserved under the pullback action of diffeomorphisms,
it suffices to consider the case that $\varphi=id_M$.

From Proposition \ref{prop-equivalence}, it is enough to consider in the case
when  $\overline{h}=u^{\frac{4}{n-2}}h\in [h]_{0}$.
Since $\overline{h}=u^{\frac{4}{n-2}}h$, we have
\begin{equation}\label{3}
\begin{split}
-\frac{4(n-1)}{n-2}\Delta_{h}u+R_{h}u&=R_{\overline{h}}u^{\frac{n+2}{n-2}}~~\mbox{ in }M,\\
\frac{2(n-1)}{n-2}\frac{\partial u}{\partial\nu_{h}}+H_{h}u&=H_{\overline{h}}u^{\frac{n}{n-2}}~~\mbox{ on }\partial M,
\end{split}
\end{equation}
where $\Delta_{h}$ denotes the Laplacian of $h$,
and $\frac{\partial}{\partial\nu_{h}}$ denotes the outward normal derivative with respect to $h$.
Since $h$ is a scalar-flat metric and $\Delta_{h} u = 0$ in $M$, it follows that $\overline{h}$ is scalar-flat as well.
Hence, we compute
\begin{equation*}
\begin{split}
\mathcal{E}(\overline{h})
&=\frac{\int_MR_{\overline{h}}dV_{\overline{h}}+2\int_{\partial M}H_{\overline{h}}dA_{\overline{h}}}{\mbox{Vol}_{\overline{h}}(\partial M)^{\frac{n-2}{n-1}}}=\frac{2\int_{\partial M}H_{\overline{h}}dA_{\overline{h}}}{\mbox{Vol}_{\overline{h}}(\partial M)^{\frac{n-2}{n-1}}}\\
&=\frac{2\int_{\partial M}u^{-\frac{n}{n-2}}(\frac{2(n-1)}{n-2}\frac{\partial u}{\partial\nu_{h}}+H_hu)u^{\frac{2(n-1)}{n-2}}dA_h}{(\int_{\partial M}u^{\frac{2(n-1)}{n-2}}dA_h)^{\frac{n-2}{n-1}}}\\
&=\frac{2\int_{\partial M}(\frac{2(n-1)}{n-2}u\frac{\partial u}{\partial\nu_{h}}+H_hu^2)dA_h}{(\int_{\partial M}u^{\frac{2(n-1)}{n-2}}dA_h)^{\frac{n-2}{n-1}}},
\end{split}
\end{equation*}
where we have used the second equation of (\ref{3})
and the fact that $\overline{h}$ is scalar-flat.
By integration by parts and the fact that $\Delta_{h} u = 0$ in $M$, we obtain
\begin{equation*}
\mathcal{E}(\overline{h})
=\frac{\int_M\frac{4(n-1)}{n-2}|\nabla_hu|_{h}^2dV_h+2\int_{\partial M}H_hu^2dA_h}{(\int_{\partial M}u^{\frac{2(n-1)}{n-2}}dA_h)^{\frac{n-2}{n-1}}}.
\end{equation*}
It follows from the assumption (\ref{1}) that
$$\frac{H_h}{H_g}|\nabla_g u|_{g}^2\leq |\nabla_h u|_{h}^2.$$
This together with the assumptions (\ref{1}), (\ref{eq-assumption-thm1}) and
$dV_{h}=\gamma dV_g$ gives
\begin{equation*}
\begin{split}
\mathcal{E}(\overline{h})
&=\frac{\int_M\frac{4(n-1)}{n-2}|\nabla_hu|_{h}^2\gamma dV_g+2\int_{\partial M}H_hu^2 dA_h}{(\int_{\partial M}u^{\frac{2(n-1)}{n-2}} dA_h)^{\frac{n-2}{n-1}}}\\
&\ge \frac{\int_M\frac{4(n-1)}{n-2}|\nabla_hu|_{h}^2\gamma dV_g+2\int_{\partial M}H_hu^2 C^{-\frac{n-1}{2}}~dA_{g}}{\left( \int_{\partial M}u^{\frac{2(n-1)}{n-2}} \left( \frac{H_{g}}{H_{h}} \right)^{\frac{n-1}{2}}~dA_g \right)^{\frac{n-2}{n-1}}} \\
&\geq \min \{ \gamma, C^{-\frac{n-1}{2}}\} \left( \frac{H_h}{H_g} \right)^{\frac{n}{2}}
\frac{\int_M\frac{4(n-1)}{n-2}|\nabla_gu|_{g}^2dV_g+2\int_{\partial M}H_gu^2dA_g}{(\int_{\partial M}u^{\frac{2(n-1)}{n-2}}dA_g)^{\frac{n-2}{n-1}}}\\
&=\min \{ \gamma, C^{-\frac{n-1}{2}}\} \left( \frac{H_h}{H_g} \right)^{\frac{n}{2}}\mathcal{E}(u^{\frac{4}{n-2}}g).
\end{split}
\end{equation*}
Since $g$ is a type II Yamabe metric, we have
\begin{equation}\label{5}
\mathcal{E}(\overline{h})\geq\min \{ \gamma, C^{-\frac{n-1}{2}}\} \left( \frac{H_h}{H_g} \right)^{\frac{n}{2}}\mathcal{E}(g).
\end{equation}
Note that the right hand side of (\ref{5}) is equal to
\begin{equation*}
\begin{split}
&\min \{ \gamma, C^{-\frac{n-1}{2}}\} \left( \frac{H_h}{H_g} \right)^{\frac{n}{2}}\mathcal{E}(g)\\
&=\min \{ \gamma, C^{-\frac{n-1}{2}}\} \left( \frac{H_h}{H_g} \right)^{\frac{n}{2}}\frac{2\int_{\partial M}H_gdA_g}{(\int_{\partial M}dA_g)^{\frac{n-2}{n-1}}} \\
&\ge\min \{ \gamma, C^{-\frac{n-1}{2}}\} \left( \frac{H_h}{H_g} \right)^{\frac{n-2}{2}} \left( \frac{H_{h}}{H_{g}} \right)^{\frac{n-1}{2}} C^{-\frac{n-2}{2}}\frac{2\int_{\partial M}H_h dA_h}{(\int_{\partial M} dA_h)^{\frac{n-2}{n-1}}}.
\end{split}
\end{equation*}
Hence, if
\begin{equation}\label{eq-preassumption}
  C \le \min \left\{ \gamma^{-\frac{2}{n-1}},~\left( \frac{H_{h}}{H_{g}} \right)^{\frac{2n-3}{n-2}} \gamma^{\frac{2}{n-2}} \right\}~~\mbox{ or }~~\gamma^{-\frac{2}{n-1}} \le C \le \frac{H_{h}}{H_{g}},
\end{equation}
then it follows that
\[
\min \{ \gamma, C^{-\frac{n-1}{2}}\} \left( \frac{H_h}{H_g} \right)^{\frac{n-2}{2}} \left( \frac{H_{h}}{H_{g}} \right)^{\frac{n-1}{2}} C^{-\frac{n-2}{2}} \ge 1.
\]
Therefore we finally obtain that $\mathcal{E}(\overline{h}) \ge \mathcal{E}(h)$, and it follows from the definition of the type II Yamabe metrics that
$h$ is a type II Yamabe metric in $[h]$.
Note that the second condition in (\ref{eq-preassumption}) is equivalent to the condition that $dA_{g} = \left( H_{h} \cdot H_{g}^{-1} \right)^{\frac{n-1}{2}} dA_{h}$, since we have assumed (\ref{1}).

Furthermore, if we assume that $\overline{h}$ is also a type II Yamabe metric,
then we must have $\mathcal{E}(\overline{h})=\mathcal{E}(h)$.
In particular, all the above inequalities become equalities.
Hence, if we further assume (\ref{2}), we must have
$\nabla_g u=0$, which implies that $u\equiv \mathrm{const}$ on $M$,
since $M$ is connected.
In other words, $h$ is a unique Yamabe metric up to positive constant in $[h]_0$.
\end{proof}

\begin{cor}
  Let $M, g, h$ be as in Theorem \ref{thm1}.
  Assume that $\mathrm{Vol}(M, h) = \mathrm{Vol}(M, g)$ (i.e., $\gamma =1$) and \eqref{1} in Theorem \ref{thm1}.
  If $H_{g} \ge H_{h}$ and
  \[
  \sqrt{\det(g|_{\partial M})} \cdot H_{g}^{\frac{(n-1)(2n-3)}{2(n-2)}} \le \sqrt{\det(h|_{\partial M})} \cdot H_{h}^{\frac{(n-1)(2n-3)}{2(n-2)}}~\mathrm{on}~\partial M,
  \]
  then $h$ is also a type II Yamabe metric.
  Here, $g|_{\partial M}$ and $h|_{\partial M}$ respectively denote the induced metrics on $\partial M$ of $g$ and $h$.
  Moreover, if \eqref{2} is assumed instead of \eqref{1}, then $h$ is a unique type II Yamabe metric in its conformal class.
\end{cor}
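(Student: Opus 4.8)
The plan is to derive the Corollary directly from Theorem \ref{thm1} by producing an admissible constant $C$ for which hypothesis (\ref{eq-assumption-thm1}) holds in the special case $\gamma = 1$. First I would record the constancy facts that make the argument well posed: since $g$ is a type II Yamabe metric, Proposition \ref{prop-crit} forces $H_g$ to be a positive constant on $\partial M$, and by the standing assumption $h$ has positive constant mean curvature $H_h$. Hence $H_h/H_g$ is a genuine positive constant, and any quantity built from it is a legitimate candidate for the constant $C$ appearing in Theorem \ref{thm1}.

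The key choice is $C := \left( H_h/H_g \right)^{\frac{2n-3}{n-2}}$. With $\gamma = 1$ the first alternative of (\ref{eq-assumption-thm1}) reads $C \le \min\{1,\ (H_h/H_g)^{\frac{2n-3}{n-2}}\}$. Because $H_g \ge H_h > 0$ we have $H_h/H_g \le 1$, so $(H_h/H_g)^{\frac{2n-3}{n-2}} \le 1$; thus the minimum equals $C$ itself and the required $C \le C$ is trivial. It then remains to check the density bound $dA_g \le C^{\frac{n-1}{2}} dA_h$. Writing the induced boundary measures in local coordinates as $dA_g = \sqrt{\det(g|_{\partial M})}\,dx$ and $dA_h = \sqrt{\det(h|_{\partial M})}\,dx$, this pointwise inequality is equivalent to $\sqrt{\det(g|_{\partial M})} \le (H_h/H_g)^{\frac{(n-1)(2n-3)}{2(n-2)}}\sqrt{\det(h|_{\partial M})}$, and multiplying through by $(H_g/H_h)^{\frac{(n-1)(2n-3)}{2(n-2)}}$ turns this into $\sqrt{\det(g|_{\partial M})}\,H_g^{\frac{(n-1)(2n-3)}{2(n-2)}} \le \sqrt{\det(h|_{\partial M})}\,H_h^{\frac{(n-1)(2n-3)}{2(n-2)}}$, which is exactly the curvature–determinant inequality assumed in the Corollary.

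With $C$ so chosen, every hypothesis of Theorem \ref{thm1} is in force, the monotonicity assumption (\ref{1}) (resp.\ the strict version (\ref{2})) being carried over unchanged. Theorem \ref{thm1} then gives that $h$ is a type II Yamabe metric (resp.\ the unique one, up to a positive constant, in $[h]_0$). For the last sentence I would observe that uniqueness in $[h]_0$ is already uniqueness in the full conformal class: any type II Yamabe metric conformal to $h$ is scalar-flat by Proposition \ref{prop-crit}, and since $R_h = 0$ its conformal factor $u$ satisfies $\Delta_h u = 0$, so such a competitor automatically lies in $[h]_0$.

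I do not expect a genuine obstacle here; the whole content of the proof is the correct ansatz $C = (H_h/H_g)^{\frac{2n-3}{n-2}}$. The only steps demanding care are the bookkeeping that rewrites the measure inequality $dA_g \le C^{\frac{n-1}{2}} dA_h$ as the stated determinant inequality, and the verification that this $C$ does not exceed the ``$\min$'' bound in (\ref{eq-assumption-thm1}) — which is precisely the point where the hypothesis $H_g \ge H_h$ is used.
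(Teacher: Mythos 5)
Your proof is correct and takes essentially the same route the paper intends: the paper's own proof is just the one-line remark that the corollary's hypotheses imply the first alternative of (\ref{eq-assumption-thm1}) and (\ref{1}), and your explicit choice $C = \left( H_h/H_g \right)^{\frac{2n-3}{n-2}}$, together with the coordinate rewriting of $dA_g \le C^{\frac{n-1}{2}} dA_h$ into the determinant inequality, is precisely the omitted verification. Your closing observation that any type II Yamabe competitor in $[h]$ automatically lies in $[h]_0$ (scalar-flatness from Proposition \ref{prop-crit} plus $R_h = 0$ forces $\Delta_h u = 0$) fills in a step the paper leaves implicit when passing from uniqueness in $[h]_0$ to uniqueness in the full conformal class.
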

\begin{proof}
One can easily check that the assumptions of this corollary imply the first condition of (\ref{eq-assumption-thm1}) and (\ref{1}) in Theorem \ref{thm1}.
  Therefore the assertion follows from Theorem \ref{thm1}.
\end{proof}

\begin{cor}
  Let $M$ be as in Theorem \ref{thm1} and $\{ g_{t}~|~T \le t \le T' \}$ a smooth variation of metrics satisfying the following conditions:
  \begin{itemize}
    \item[(1)] $R_{g_{t}} = 0$ in $M$ and $H_{g_{t}} = \mathrm{const}$ on $\partial M$ for all $t \in [T, T']$,
    \item[(2)] $g_{T}$ is a type II Yamabe metric,
    \item[(3)] $H_{g_{t}} > 0$ for all $t \in [T, T')$,
    \item[(4)] $H_{g_{T'}} = 0$ on $\partial M$,
    \item[(5)] $C^{\frac{n-1}{2}} dA_{g_{T'}} > dA_{g_{T}}$ for a positive constant $C > 0$ such that $C \le \gamma^{-\frac{2}{n-1}},$
where $\gamma = \mathrm{Vol}(M, g_{T'}) \cdot \mathrm{Vol}(M, g_{T})^{-1}$.
  \end{itemize}
  Then there exists a positive constant $\delta > 0$ such that $g_{t}$ is also a type II Yamabe metric for every $t \in [T' - \delta, T')$.
\end{cor}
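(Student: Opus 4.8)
The plan is to apply Theorem \ref{thm1} with the known type II Yamabe metric taken to be $g=g_T$ and the candidate taken to be $h=g_t$, and then to promote the hypotheses from the endpoint configuration at $t=T'$ to a left-neighborhood $[T'-\delta,T')$ by an openness argument. Conditions (1)--(5) are arranged so that, in the limit $t\to T'$, the metric $g_{T'}$ is scalar-flat with $H_{g_{T'}}=0$ and (5) is exactly the measure hypothesis of Theorem \ref{thm1} for the pair $(g_T,g_{T'})$; since the relevant inequalities at $T'$ are either strict or degenerate in a favorable direction, I expect them to persist for $t$ slightly below $T'$, so that Theorem \ref{thm1} certifies $g_t$ as a type II Yamabe metric there.

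First I would record the structural hypotheses. By (1) and (3), for every $t\in[T,T')$ the metric $g_t$ is scalar-flat with positive constant mean curvature, so it plays the role of ``$h$'' in Theorem \ref{thm1}; by (2) and (3), $g_T$ is a type II Yamabe metric with $H_{g_T}>0$, so it plays the role of ``$g$''. Next I would verify the pointwise form inequality: since (4) gives $H_{g_{T'}}=0$, the symmetric $2$-tensor $H_{g_{T'}}g_{T'}$ vanishes identically, whence $H_{g_{T'}}g_{T'}<H_{g_T}g_T$ on the compact $M$ (the difference is positive definite because $H_{g_T}>0$). As $t\mapsto H_{g_t}g_t$ is smooth into the bundle of symmetric $2$-tensors, the strict inequality $H_{g_t}g_t<H_{g_T}g_T$ survives on all of $M$ for $t$ in a one-sided neighborhood of $T'$; this is precisely the strict hypothesis \eqref{2}, which through Theorem \ref{thm1} would also yield uniqueness in $[g_t]_0$. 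Likewise, (5) gives the strict pointwise bound $dA_{g_T}<C^{\frac{n-1}{2}}dA_{g_{T'}}$ on $\partial M$, and by compactness of $\partial M$ together with continuity of $t\mapsto dA_{g_t}$ this upgrades to $dA_{g_T}\le C^{\frac{n-1}{2}}dA_{g_t}$ for $t$ near $T'$, the measure hypothesis of Theorem \ref{thm1}.

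The remaining and genuinely delicate point is the scalar constraint \eqref{eq-assumption-thm1}, which I expect to be the main obstacle. Condition (5) directly supplies its first entry $C\le\gamma^{-\frac{2}{n-1}}$, where $\gamma=\mathrm{Vol}(M,g_{T'})\,\mathrm{Vol}(M,g_T)^{-1}$ is the $t\to T'$ limit of $\mathrm{Vol}(M,g_t)\,\mathrm{Vol}(M,g_T)^{-1}$. The trouble is the second entry $\bigl(H_{g_t}/H_{g_T}\bigr)^{\frac{2n-3}{n-2}}\gamma^{\frac{2}{n-2}}$: since $H_{g_t}\to H_{g_{T'}}=0$ while $\gamma$ stays bounded, this quantity tends to $0$, so for a fixed $C>0$ the first alternative of \eqref{eq-assumption-thm1} fails uniformly up to $T'$, and the second alternative $dA_{g_T}=(H_{g_t}/H_{g_T})^{\frac{n-1}{2}}dA_{g_t}$ degenerates as well. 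The way around this is to exploit the degenerate endpoint: at $t=T'$ the metric $g_{T'}$ is scalar-flat with vanishing mean curvature, hence automatically a unique type II Yamabe metric by Proposition \ref{prop-nonpositive}, so the conclusion holds in the limit, and it remains to make the statement open for $t<T'$. Concretely, I would revisit the energy comparison underlying Theorem \ref{thm1}: using Proposition \ref{prop-equivalence}, write a competitor in $[g_t]_0$ as $u^{\frac{4}{n-2}}g_t$ with $\Delta_{g_t}u=0$, and observe that the only destabilizing term in $\mathcal{E}(u^{\frac{4}{n-2}}g_t)-\mathcal{E}(g_t)$, namely the boundary contribution, carries the small factor $H_{g_t}$, whereas the Dirichlet term $\int_M|\nabla_{g_t}u|^2\,dV_{g_t}$ does not. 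After the normalization $\int_{\partial M}u^{\frac{2(n-1)}{n-2}}\,dA_{g_t}=1$, a H\"older/Steklov-type estimate should then force $\mathcal{E}(u^{\frac{4}{n-2}}g_t)\ge\mathcal{E}(g_t)$ once $H_{g_t}$ is small enough, i.e.\ once $t\in[T'-\delta,T')$; making this bound uniform in $u$ and extracting the explicit $\delta$ from (4)--(5) is where the real work lies.
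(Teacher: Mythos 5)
Your reduction---apply Theorem \ref{thm1} with $g=g_T$, $h=g_t$, and propagate the hypotheses from the endpoint configuration at $t=T'$ to a left-neighborhood---is exactly the route of the paper, whose proof of this corollary consists of the single assertion that \eqref{eq-assumption-thm1} and \eqref{1} hold for $t$ sufficiently close to $T'$. Your verifications of \eqref{1} (indeed of the strict version \eqref{2}) and of the measure bound $dA_{g_T}\le C^{\frac{n-1}{2}}dA_{g_t}$ near $T'$ are correct, and, more importantly, the obstruction you isolate is genuine: any constant $C_t$ admissible for the pair $(g_T,g_t)$ must satisfy $C_t^{\frac{n-1}{2}}\ge \max_{\partial M}\big(dA_{g_T}/dA_{g_t}\big)$, which stays bounded away from $0$ as $t\to T'$ because $dA_{g_t}\to dA_{g_{T'}}$, while the second entry $\big(H_{g_t}/H_{g_T}\big)^{\frac{2n-3}{n-2}}\gamma_t^{\frac{2}{n-2}}$ of the minimum in \eqref{eq-assumption-thm1} (with $\gamma_t=\mathrm{Vol}(M,g_t)\,\mathrm{Vol}(M,g_T)^{-1}$) tends to $0$ since $H_{g_t}\to H_{g_{T'}}=0$; the equality alternative $dA_{g_T}=(H_{g_t}/H_{g_T})^{\frac{n-1}{2}}dA_{g_t}$ fails for the same reason. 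The failure is intrinsic to the method, not to bookkeeping: the only use of \eqref{1} in the proof of Theorem \ref{thm1} is the bound $\frac{H_h}{H_g}|\nabla_g u|_g^2\le |\nabla_h u|_h^2$, which degenerates as $H_h\to0$, so that chain of estimates yields at best $\mathcal{E}(\bar h)\ge F_t\,\mathcal{E}(g_t)$ with $F_t\sim (H_{g_t}/H_{g_T})^{\frac{n-2}{2}}\to 0$, a vacuous bound. So you have not misread the theorem; you have located a step the paper asserts but does not justify, and which cannot hold as stated.

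However, your proposal does not close this gap, so it does not prove the corollary either: the part you defer (``making this bound uniform in $u$\dots is where the real work lies'') is the entire difficulty. After normalizing $\int_{\partial M}u^{\frac{2(n-1)}{n-2}}dA_{g_t}=\mathrm{Vol}(\partial M,g_t)$ and using Proposition \ref{prop-equivalence} to restrict to positive harmonic competitors, what is needed is a bound $\mathrm{Vol}(\partial M,g_t)-\int_{\partial M}u^2\,dA_{g_t}\le \Lambda\int_M|\nabla u|^2\,dV_{g_t}$ with $\Lambda$ independent of $u$ and of $t$ near $T'$; then $H_{g_t}\le \tfrac{2(n-1)}{(n-2)\Lambda}$ would finish. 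H\"older's inequality only gives nonnegativity of the left side and cannot produce such a $\Lambda$, because both sides vanish to second order at $u\equiv 1$: near constants this is a quantitative stability estimate governed by the first nonzero Steklov eigenvalue $\sigma_1(g_{T'})>0$ (the same nondegeneracy entering \cite{cardenas2019uniqueness}), with control of the higher-order error in the constraint; away from constants one must exclude competitors with small Dirichlet energy but non-small deficit by a concentration-compactness (Brezis--Lieb type) argument, since the trace embedding at the critical exponent $\frac{2(n-1)}{n-2}$ is not compact. None of this is carried out in your sketch. As written, your proposal correctly shows that the paper's one-line proof is insufficient and outlines a plausible repair, but it does not contain a proof of the statement.
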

\begin{proof}
  By the assumptions of the corollary, $g_{t}$ satisfies the conditions (\ref{eq-assumption-thm1}) and (\ref{1}) in Theorem \ref{thm1} when $t$ is sufficiently close to $T'$.
  Hence we can apply Theorem \ref{thm1} to such $g_{t}$ and obtain the desired assertion.
\end{proof}

\section{Obata-type theorem}\label{section-obata}

In \cite{obata1962certain, obata1971conjectures}, Obata proved the following uniqueness theorem for constant scalar curvature metrics on a closed Einstein manifold.
\begin{theorem}[\cite{obata1962certain, obata1971conjectures}]
Let $g$ be an Einstein metric on a closed $n$-manifold ($n \ge 2$), and $\bar{g} \in [g]$ a constant scalar curvature metric. Then the following holds:
  \begin{itemize}
    \item[(1)] If $(M, [g])$ is conformally equivalent to $(\mathbb{S}^{n}, [g_{std}])$, then there exists a homothety $\Phi : (\mathbb{S}^{n}, [g_{std}]) \rightarrow (M, g)$ and a conformal transformation $\varphi$ such that
    $\Phi^{*} \bar{g} = \varphi^{*} (\Phi^{*} g)$.
    \item[(2)] If $(M, [g])$ is not conformally equivalent to $(\mathbb{S}^{n}, [g_{std}])$, then $\bar{g}$ is equal to $g$ up to rescaling.
  \end{itemize}
  Here, $g_{std}$ denotes the round metric on the $n$-sphere $\mathbb{S}^{n}$ of constant sectional curvature one.
\end{theorem}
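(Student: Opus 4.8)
The plan is to follow the classical Obata argument, reducing everything to showing that the conformal factor has vanishing trace-free Hessian. Writing $\bar g = \phi^{-2} g$ for a smooth positive function $\phi$, I would first record the conformal transformation law for the trace-free part of the Ricci tensor. For $\bar g = \phi^{-2}g$ one computes
\[
(\overline{\mathrm{Ric}})^0 = (\mathrm{Ric})^0 + (n-2)\,\phi^{-1}(\nabla^2\phi)^0,
\]
where $(\cdot)^0$ denotes the $g$-trace-free part and $\nabla$, $\mathrm{Ric}$ refer to $g$; the choice of the power $\phi^{-2}$ (rather than $\phi^2$) is precisely what makes the $d\phi\otimes d\phi$ terms cancel, leaving only the Hessian. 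Since $g$ is Einstein, $(\mathrm{Ric})^0=0$, so $S := (\overline{\mathrm{Ric}})^0 = (n-2)\phi^{-1}T$ with $T:=(\nabla^2\phi)^0$. Thus the trace-free Hessian of $\phi$ is the sole obstruction to $\bar g$ being Einstein as well.

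The heart of the proof is an integral identity forcing $T\equiv 0$. Because $\bar g$ has constant scalar curvature, the contracted second Bianchi identity for $\bar g$ gives $\mathrm{div}_{\bar g}S = \tfrac{n-2}{2n}\,dR_{\bar g}=0$. I would then pair this divergence-free tensor with $d\phi$ and integrate by parts over the closed manifold $M$; expanding $\langle S, \overline\nabla^2\phi\rangle_{\bar g}$ via the conformal change of the Hessian and then invoking the constant-scalar-curvature equation for $\bar g$ to control the remaining first-order terms, one arrives at an identity of the shape $\int_M \phi^{\,a}\,|T|_g^2\,dV_g = 0$ for a suitable exponent $a$. Since $\phi>0$ and the integrand is nonnegative, this yields $T\equiv 0$, i.e. $\nabla^2\phi = \tfrac{\Delta_g\phi}{n}\,g$.

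With the trace-free Hessian eliminated, I would run the standard dichotomy. If $\phi$ is constant, then $\bar g=\phi^{-2}g$ is a constant rescaling of $g$, which is conclusion (2). If $\phi$ is nonconstant, then taking the divergence of $\nabla^2\phi=\tfrac{\Delta_g\phi}{n}g$ and using the Einstein condition shows $\Delta_g\phi = -\tfrac{R_g}{n-1}\phi + \mathrm{const}$, forcing in particular $R_g>0$; after subtracting a constant the function $\psi$ then satisfies Obata's concircular equation $\nabla^2\psi = -\tfrac{R_g}{n(n-1)}\,\psi\,g$. By Obata's rigidity theorem for such fields, $(M,g)$ is homothetic to the round sphere $(\mathbb{S}^n,g_{std})$, and on the sphere the positive solutions of the governing equation correspond exactly to the conformal factors produced by the Möbius group, so $\bar g$ is obtained from $g$ by a conformal transformation up to homothety, which is conclusion (1).

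The main obstacle is the integral identity in the second step: pairing $\mathrm{div}_{\bar g}S$ with $d\phi$ produces, besides the desired $|T|^2$ term, a cross term of the form $\phi^{-1}T(\nabla\phi,\nabla\phi)$, and the delicate point is to use the constant-scalar-curvature equation for $\bar g$ together with a further integration by parts to absorb this cross term and render the final integrand manifestly sign-definite; getting the exponents and signs to align so that $\phi>0$ closes the argument is where the real work lies. The sphere rigidity invoked at the end is the classical theorem of Obata and may simply be cited. Finally, the case $n=2$ is degenerate for this trace-free-Hessian machinery—the factor $n-2$ vanishes and the Einstein hypothesis is vacuous—so it would be treated separately via the uniformization-type rigidity for metrics of constant Gauss curvature.
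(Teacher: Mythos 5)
The paper itself gives no proof of this statement: it is quoted as Obata's classical theorem, with the citations standing in for the argument. So there is no in-paper proof to compare against, and your proposal has to stand on its own. In outline it does: writing $\bar g=\phi^{-2}g$ so that the trace-free Ricci tensors satisfy $(\overline{\mathrm{Ric}})^{0}=(\mathrm{Ric})^{0}+(n-2)\phi^{-1}(\nabla^{2}\phi)^{0}$, using the contracted Bianchi identity plus constancy of $R_{\bar g}$ to get $\mathrm{div}_{\bar g}S=0$, concluding $(\nabla^{2}\phi)^{0}\equiv 0$ from an integral identity, and then running the dichotomy (constant $\phi$ gives rescaling; nonconstant $\phi$ gives $\Delta_{g}\phi=-\tfrac{R_{g}}{n-1}\phi+\mathrm{const}$, forces $R_{g}>0$, and reduces to Obata's concircular equation and sphere rigidity) is exactly the classical Obata/Lee--Parker argument, and your separate treatment of $n=2$ via constant Gauss curvature is also the right move.

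The genuine gap is precisely the step you flag as ``where the real work lies,'' and the mechanism you propose for it is not the one that works. The constant-scalar-curvature hypothesis is spent the moment you invoke Bianchi to get $\mathrm{div}_{\bar g}S=0$; it has nothing further to contribute toward absorbing the cross term $\phi^{-1}T(\nabla\phi,\nabla\phi)$, and a ``further integration by parts'' using that equation does not make the integrand sign-definite. The cross term is killed algebraically, by the correct choice of weight. Concretely, for conformal metrics $\bar g=\phi^{-2}g$ and any symmetric $2$-tensor $T$ that is trace-free, one has the identity
\[
\mathrm{div}_{\bar g}T=\phi^{\,n}\,\mathrm{div}_{g}\bigl(\phi^{\,2-n}T\bigr),
\]
so $\mathrm{div}_{\bar g}S=0$ with $S=(n-2)\phi^{-1}(\nabla^{2}\phi)^{0}$ is equivalent to $\mathrm{div}_{g}\bigl(\phi^{\,1-n}(\nabla^{2}\phi)^{0}\bigr)=0$. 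Pairing this $g$-divergence-free tensor with $d\phi$ and integrating by parts over the closed manifold gives
\[
0=\int_{M}\phi^{\,1-n}\bigl\langle(\nabla^{2}\phi)^{0},\nabla^{2}\phi\bigr\rangle_{g}\,dV_{g}
=\int_{M}\phi^{\,1-n}\,\bigl|(\nabla^{2}\phi)^{0}\bigr|_{g}^{2}\,dV_{g},
\]
with no cross term at all, since the pure-trace part of $\nabla^{2}\phi$ pairs to zero against a trace-free tensor; this is your identity with $a=1-n$. Equivalently, pair $\mathrm{div}_{\bar g}S$ with $\phi^{-2}d\phi=-d(\phi^{-1})$ instead of $d\phi$: the weight $\phi^{-2}$ is exactly what cancels the $d\phi\otimes d\phi$ contribution generated by the integration by parts and by the conformal change of the Hessian. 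With this substitution your second step closes, and the remainder of your outline goes through as written.
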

On compact manifolds with boundary, some Obata-type theorems were proved for Escobar's first problem \cite{akutagawa2021obata, escobar1990uniqueness}.
In view of these, a natural question is the following:
\begin{prob}\label{prob-Obata}
Does an Obata-type theorem hold in our case? More precisely, for a scalar-flat metric $g_{0}$ on a compact manifold with totally umbilic boundary,
 if $g\in[g_0]$ such that $R_{g} = 0$ in $M$ and $H_{g}$ is constant on $\partial M$, then is it true that $g = c g_{0}$ for some constant $c>0$?
\end{prob}

It follows from Proposition \ref{prop-nonpositive} $(1)$ that this is true in the nonpositive case, i.e.
if such metrics $g$ and $g_{0}$ have the nonpositive mean curvatures of same sign, then $g$ is equal to $g_{0}$ up to rescaling.

\subsection{Examples of non-type II Yamabe scalar-flat metrics with umbilic boundary of dimension $4, 5,$ or $6$}
We are going to apply {\cite[Th\'{e}or\`{e}me 5]{cherrie1984problemes}} to construct some examples of metrics which are conformal equivalent to the standard flat metric on the unit $n$-ball $\mathbb{B}^{n}$ where $n = 4, 5$ or $6,$ such that their scalar curvature is flat in $\mathbb{B}^{n}$ and mean curvature is  not constant on $\partial \mathbb{B}^{n}.$
We have the following:

\begin{theorem}[A special case of {\cite[Th\'{e}or\`{e}me 5]{cherrie1984problemes}}]\label{theorem-cherrier}
 Let $M^{n}$ be a compact connected smooth $n$-manifold with boundary $\partial M$ of dimension $n \ge 3$ and $g_{0}$ a Riemannian metric on $M.$
For a smooth function $v$ and a positive smooth function $v' > 0$ on $\partial M,$ if
\begin{equation}\label{eq-sufficient}
  \frac{\sigma}{\tau} \cdot \sup_{\partial M} v' \cdot \left( C(n-1, \tau, g_{0})^{2} \mu_{\sigma, \tau} \right)^{\tau/2} < 1,
\end{equation}
then there is a positive smooth function $\varphi$ on $\overline{M}$ and a constant $\lambda \in \mathbb{R}$ such that the following  holds:
\begin{equation}\label{eq-conformal}
  \begin{cases}
  \Delta_{g_{0}} \varphi = 0~\mathrm{in}~M, \\
  \frac{\partial \varphi}{\partial \nu_{g_{0}}} + \frac{n-2}{2(n-1)} v \varphi = \lambda \frac{n-2}{2(n-1)} v' \varphi^{\frac{n}{n-2}}~\mathrm{on}~\partial M.
\end{cases}
\end{equation}
Here, $\sigma = \frac{2n}{n-2},~\tau = \frac{2(n-1)}{n-2},$
\[
\begin{split}
  &C(n-1, \tau, g_{0}) \\
  &:= \sup \Bigg\{ C > 0:\mathrm{There~is~a~constant}~\tilde{C} = \tilde{C}(C, M, g_{0}) > 0~\mathrm{such~that} \\
  &\hspace{8mm}\left( \int_{\partial M} |\psi|^{\tau} dA_{g_{0}} \right)^{\frac{2}{\tau}} \le C \int_{M} |\nabla \psi|^{2} dV_{g_{0}} + \tilde{C} \int_{\partial M} \psi^{2} dA_{g_{0}}~\mathrm{for~all}~\psi \in H^{1}(M) \Bigg\}
\end{split}
\]
and
\[
\mu_{\sigma, \tau} := \inf_{\phi \in H^{1}(M),~\Gamma_{\sigma, \tau} (\phi) = 1} \int_{M} |\nabla \phi|^{2} dV_{g_{0}} + \frac{n-2}{2(n-1)} \int_{\partial M} v \phi^{2} dA_{g_{0}},
\]
where $\Gamma_{\sigma, \tau} (\phi) := \frac{n-2}{2(n-1)} \cdot \frac{\sigma}{\tau} \int_{\partial M} v' |\phi|^{\tau} dA_{g_{0}}$.
\end{theorem}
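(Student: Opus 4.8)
The plan is to realize (\ref{eq-conformal}) as the Euler--Lagrange system of the constrained minimization problem whose value is $\mu_{\sigma,\tau}$. Concretely, I would minimize the quadratic functional $I(\phi) = \int_M |\nabla \phi|^2\, dV_{g_0} + \frac{n-2}{2(n-1)} \int_{\partial M} v\, \phi^2\, dA_{g_0}$ over the constraint set $\{\phi \in H^1(M) : \Gamma_{\sigma,\tau}(\phi) = 1\}$. Computing the first variation, a minimizer $\phi$ satisfies $\Delta_{g_0}\phi = 0$ in $M$ (since neither the constraint nor the second term of $I$ contributes in the interior) together with a nonlinear Robin-type condition on $\partial M$ whose nonlinear power is $\tau - 1 = \tfrac{n}{n-2}$, coming exactly from the critical exponent in $\Gamma_{\sigma,\tau}$. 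After normalizing the minimizer by a positive constant and relabeling the Lagrange multiplier as $\lambda$, this is precisely (\ref{eq-conformal}). So the whole statement reduces to showing that $\mu_{\sigma,\tau}$ is attained by a positive function.

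The central difficulty, and the reason for hypothesis (\ref{eq-sufficient}), is that $\tau = \tfrac{2(n-1)}{n-2}$ is the critical exponent for the Sobolev trace embedding $H^1(M) \hookrightarrow L^\tau(\partial M)$: this embedding is continuous but not compact, so the direct method fails because a minimizing sequence may lose mass by concentrating at a boundary point (bubbling). To overcome this I would use subcritical approximation. For each $q \in (2,\tau)$ the trace embedding $H^1(M) \hookrightarrow L^q(\partial M)$ is compact, so the analogous functional constrained by $\int_{\partial M} v' |\phi|^q\, dA_{g_0} = 1$ attains its infimum at some $\phi_q$, which may be taken nonnegative by replacing $\phi_q$ with $|\phi_q|$ and hence, by the strong maximum principle and the Hopf lemma, strictly positive; each $\phi_q$ solves the corresponding subcritical boundary value problem by elliptic bootstrapping.

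I would then establish uniform $H^1$ bounds on $\{\phi_q\}$ and let $q \uparrow \tau$. The quantity $C(n-1,\tau,g_0)$ is the sharp constant in the trace inequality, and a short computation shows that (\ref{eq-sufficient}) is equivalent to the strict inequality $\mu_{\sigma,\tau} < C(n-1,\tau,g_0)^{-2}\big(\tau/(\sigma\,\sup_{\partial M} v')\big)^{2/\tau}$, i.e.\ that the minimizing level lies strictly below the threshold at which a single standard bubble on the half-space $\mathbb{R}^n_+$ could form. Below this threshold a concentration-compactness argument excludes loss of mass on $\partial M$, so $\phi_q \to \phi$ strongly in $L^\tau(\partial M)$ and $\phi$ is a nontrivial nonnegative minimizer of the critical problem solving the weak form of (\ref{eq-conformal}). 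Finally, interior elliptic regularity together with boundary regularity for the nonlinear Robin condition (Moser iteration followed by Schauder bootstrapping) upgrades $\phi$ to a smooth positive function $\varphi$, and the strong maximum principle gives $\varphi > 0$ on all of $\overline{M}$.

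I expect the main obstacle to be precisely the passage to the critical exponent: making the smallness condition (\ref{eq-sufficient}) genuinely exclude boundary concentration. This is the boundary (Sobolev-trace) analogue of the Aubin--Schoen strict-inequality condition $Y(M) < Y(\mathbb{S}^n)$ in the closed Yamabe problem, and matching the constant in (\ref{eq-sufficient}) to the energy of the half-space extremal quantitatively sharply is the delicate technical point. Since the statement is phrased as a special case of {\cite[Th\'{e}or\`{e}me 5]{cherrie1984problemes}}, an alternative is simply to specialize Cherrier's general result with the present choice of $\sigma$, $\tau$, $v$ and $v'$, but the argument above is the mechanism underlying it.
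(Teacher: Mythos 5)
The paper gives no proof of this theorem: it is quoted as a special case of Cherrier's Th\'{e}or\`{e}me 5 \cite{cherrie1984problemes}, the only added content being the remark that the constant $C(n-1,\tau,g_{0})$ as defined here (with the $L^{2}$ remainder term on $\partial M$, rather than on $M$ as in Cherrier's original definition) still works, which the authors justify by inspecting Cherrier's proof. Your proposal instead reconstructs the mechanism behind the cited result, and the reconstruction is the right one: the system (\ref{eq-conformal}) is the Euler--Lagrange system of the constrained problem defining $\mu_{\sigma,\tau}$ (harmonic in the interior because the constraint and the $v$-term are boundary integrals, with boundary nonlinearity of power $\tau-1=\frac{n}{n-2}$); the obstruction is non-compactness of the critical trace embedding $H^{1}(M)\hookrightarrow L^{\tau}(\partial M)$; and your rewriting of (\ref{eq-sufficient}) as $\mu_{\sigma,\tau}<C(n-1,\tau,g_{0})^{-2}\bigl(\tau/(\sigma\,\sup_{\partial M}v')\bigr)^{2/\tau}$ is exactly the form in which the hypothesis is used to keep the limit of the subcritical minimizers from vanishing. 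What your route buys: by running the trace inequality in the stated (boundary-remainder) form from the outset, you would prove the theorem exactly as stated and would never need the paper's reconciliation remark about the two definitions of the constant. What it costs: the steps you name but do not execute --- uniform $H^{1}$ bounds on the $\phi_{q}$, exclusion of boundary concentration as $q\uparrow\tau$, and Moser/Schauder regularity up to the boundary for the nonlinear Neumann condition --- constitute essentially all of the technical content of Cherrier's paper, so your text is a correct outline of (essentially) Cherrier's own argument rather than a self-contained proof; the one-line alternative you mention at the end, specializing Cherrier's theorem, is what the paper actually does.
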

\begin{rema}
  The original definition of $C(n-1, \tau, g_{0})$ in \cite{cherrie1984problemes} is slightly different from the one given here (i.e., $\int_{\partial M} \psi^{2} dA_{g_{0}}$ is replaced with $\int_{M} \psi^{2} dV_{g_{0}}$). But one can see from the proof of {\cite[Th\'{e}or\`{e}me 5]{cherrie1984problemes}}
  that Theorem \ref{theorem-cherrier} still holds.
\end{rema}

We will apply this theorem to the case that $(M, \partial M, g_{0})$ is the standard Euclidean unit $n$-ball $(\mathbb{B}^{n}, \partial \mathbb{B}^{n}, g_{0})$, 
where $n\geq 3$.
In this case, $R_{g_{0}} = 0$ in $\mathbb{B}^{n}$ and $H_{g_{0}} = n-1$ on $\partial \mathbb{B}^{n}.$
We take $v \equiv n-1$ on $\partial \mathbb{B}^{n}.$

First of all, we consider the case that $v'$ is a positive constant $c > 0$.
Examining the condition under which a constant function $\phi_{0}$ satisfies
\[
\Gamma_{\sigma, \tau} (\phi_{0}) = \frac{n-2}{2(n-1)} \cdot \frac{\sigma}{\tau} \int_{\partial \mathbb{B}^{n}} c~|\phi_{0}|^{\tau} dA_{g_{0}} = 1,
\]
we find that  
\[
\phi_{0} = \left( \frac{n-2}{2(n-1)} \cdot \frac{\sigma}{\tau} c \mathrm{Vol}(\partial \mathbb{B}^{n}, g_{0}) \right)^{-\frac{1}{\tau}}.
\]
Therefore, for such a constant function $\phi_{0}$, we have $\Gamma_{\sigma, \tau}(\phi_{0}) = 1$ and
\[
\int_{\partial \mathbb{B}^{n}} \phi_{0}^{2}~dA_{g_{0}} = \mathrm{Vol}(\partial \mathbb{B}^{n})^{\frac{1}{n-1}} \left(\frac{n-2}{2(n-1)} \cdot \frac{\sigma}{\tau} c \right)^{-\frac{n-2}{n-1}}.
\]
From \cite{beckner1993sharp}, the relative Sobolev constant $C(n-1, \tau, g_{0})$ of $(\mathbb{B}^{n}, \partial \mathbb{B}^{n}, g_{0})$ defined above satisfies
\[
C(n-1, \tau, g_{0}) \le \frac{2}{n-2} \mathrm{Vol}(\partial \mathbb{B}^{n}, g_{0})^{-\frac{1}{n-1}}.
\]
Indeed, it follows from {\cite[(36) in Theorem 4]{beckner1993sharp}}  that for all $\psi \in H^{1}(M)$
\[
\left( \int_{\partial \mathbb{B}^{n}} |\psi|^{\tau} dA_{g_{0}} \right)^{\frac{2}{\tau}} \le \mathrm{Vol}(\partial \mathbb{B}^{n}, g_{0})^{-\frac{1}{n-1}} \left( \frac{2}{n-2} \int_{\mathbb{B}^{n}} |\nabla \psi_{\mathrm{harm}}|^{2} dV_{g_{0}} + \int_{\partial \mathbb{B}^{n}} \psi^{2} dA_{g_{0}} \right),
\]
where $\psi_{\mathrm{harm}}$ is the harmonic extension of $\psi|_{\partial \mathbb{B}^{n}}$.
Since $\psi - \psi_{\mathrm{harm}} = 0$ on $\partial \mathbb{B}^{n}$ and $\Delta \psi_{\mathrm{harm}} = 0$ in $\mathbb{B}^{n},$  we have
\[
\int_{\mathbb{B}^{n}} \langle \nabla \psi_{\mathrm{harm}}, \nabla (\psi - \psi_{\mathrm{harm}}) \rangle_{g_{0}}~dV_{g_{0}} = 0
\]
by integration by parts.
This implies $\int_{\mathbb{B}^{n}} |\nabla \psi_{\mathrm{harm}}|^{2} dV_{g_{0}} \le \int_{\mathbb{B}^{n}} |\nabla \psi|^{2} dV_{g_{0}}.$
Hence, we get
\[
\left( \int_{\partial \mathbb{B}^{n}} |\psi|^{\tau} dA_{g_{0}} \right)^{\frac{2}{\tau}} \le \mathrm{Vol}(\partial \mathbb{B}^{n}, g_{0})^{-\frac{1}{n-1}} \left( \frac{2}{n-2} \int_{\mathbb{B}^{n}} |\nabla \psi|^{2} dV_{g_{0}} + \int_{\partial \mathbb{B}^{n}} \psi^{2} dA_{g_{0}} \right).
\]
Combining all these yields
\[
\begin{split}
  \frac{\sigma}{\tau} \cdot \sup_{\partial M} v' \cdot \left( C(n-1, \tau, g_{0})^{2} \mu_{\sigma, \tau} \right)^{\tau/2}
  &= \frac{\sigma}{\tau} \cdot c \cdot \left( C(n-1, \tau, g_{0})^{2} \mu_{\sigma, \tau} \right)^{\tau/2} \\
  &\le \frac{\sigma}{\tau} c \cdot \left( \frac{n-2}{2} \int_{\partial \mathbb{B}^{n}} \phi_{0}^{2} dA_{g_{0}} \right)^{\frac{\tau}{2}} \cdot C(n-1, \tau, g_{0})^{\tau} \\
  &= \frac{2(n-1)}{n-2} \cdot \left( \frac{2}{n-2} \right)^{\frac{n-1}{n-2}} \cdot \mathrm{Vol}(\partial \mathbb{B}^{n}, g_{0})^{-\frac{1}{n-2}} \\
  &= \frac{2(n-1)}{n-2} \cdot \left( \frac{2}{n-2} \right)^{\frac{n-1}{n-2}} \cdot \frac{\Gamma \left( \frac{n}{2} \right)^{\frac{1}{n-2}}}{2^{\frac{1}{n-2}} \pi^{\frac{n}{2(n-2)}}}.
\end{split}
\]
Here, $\Gamma(m)$ denotes the Gamma function.
In particular, if $n = 4, 5,$ or $6$, we have respectively
\[
\begin{split}
  \frac{\sigma}{\tau} \cdot \sup_{\partial M} v' \cdot \left( C(3, \tau, g_{0})^{2} \mu_{\sigma, \tau} \right)^{\tau/2} &\le \frac{3}{\pi} < 1, \\
  \frac{\sigma}{\tau} \cdot \sup_{\partial M} v' \cdot \left( C(4, \tau, g_{0})^{2} \mu_{\sigma, \tau} \right)^{\tau/2} &\le \left( \frac{2^{10}}{3^{6} \pi^{2}} \right)^{1/3} < 1, \\
  \frac{\sigma}{\tau} \cdot \sup_{\partial M} v' \cdot \left( C(5, \tau, g_{0})^{2} \mu_{\sigma, \tau} \right)^{\tau/2} &\le \left( \frac{3 \cdot 5^{4}}{2^{9} \pi^{3}} \right)^{1/4} < 1.
\end{split}
\]

Next, we consider the case that $n = 4, 5,$ or $6$ and $v' = c + f$, where $c > 0$ is a positive constant and $f$ is a smooth non-constant function on $\partial \mathbb{B}^{n}$
such that $0 < f \le 1$ and the volume  of $\mathrm{supp} f$ with respect to $(\partial \mathbb{B}^{n}, g_{0})$ is sufficiently small.
Then, with the same arguments as above, we can take $\mathrm{supp} f$ sufficiently small so that  
\[
\frac{\sigma}{\tau} \cdot \sup_{\partial M} v' \cdot \left( C(n-1, \tau, g_{0})^{2} \mu_{\sigma, \tau} \right)^{\tau/2} < 1.
\]

As a result, from Theorem \ref{theorem-cherrier} above, we have obtained a metric $g = \varphi^{\frac{4}{n-2}} g_{0}$ on the unit $n$-ball $\mathbb{B}^{n}$ for $n = 4, 5$ or $6$ such that $R_{g} = 0$ in $\mathbb{B}^{n}$, $\partial \mathbb{B}^{n}$ is umbilic with respect to $g$, and $H_{g}$ is not constant on $\partial \mathbb{B}^{n}$ (which corresponds to the case of $\lambda \neq 0$) or $H_{g} \equiv 0$ on $\partial M$ (which corresponds to the case of $\lambda = 0$).
However, the latter case cannot occur. Indeed, from \cite{beckner1993sharp}, \cite{escobar1988sharp} or \cite{escobar1990uniqueness}, the standard metric $g_{0}$ on $\mathbb{B}^{n}$ is a type II Yamabe metric and $\mathcal{E}(g_{0}) > 0.$
Therefore, if the latter case occur, then we have $0 = \mathcal{E}(\varphi^{\frac{4}{n-2}} g_{0}) \ge \mathcal{E}(g_{0}) > 0$, which is absurd.
Moreover, we can see from the second equation in (\ref{eq-conformal}) that the conformal factor $\varphi$ is not a constant function.

Of course, by the above arguments, we can obtain a metric $g = \varphi^{\frac{4}{n-2}} g_{0}$ on the unit $n$-ball $\mathbb{B}^{n}$ for $n = 4, 5$ or $6$ such that $R_{g} = 0$ in $\mathbb{B}^{n}$, $H_{g}$ is a positive constant $c > 0$ on $\partial \mathbb{B}^{n}$ and $\partial \mathbb{B}^{n}$ is umbilic with respect to $g$.
However, we cannot determine whether the solution $\varphi$ is constant.

On the other hand, Escobar and Garcia {\cite[Proposition 2.8]{escobar2004conformal}} showed that any smooth nonnegative function on $\partial \mathbb{B}^{n}~(n \ge 3)$ can be $C^{0, \beta}$-approximated $(0 < \beta < 1)$ by a sequence of smooth positive functions which are the mean curvature of smooth metrics of the form $u^{\frac{4}{n-2}} g_{0}$ with zero scalar curvature.

\subsection{Examples of non-type II Yamabe scalar-flat metrics with umbilic boundary (with two boundary components)}\label{subsection-sch}
Fix a constant $r \in (0, 1)$ and consider the metric $g_{r, m} := ( 1 + \frac{m}{2 |x|^{n-2}} )^{\frac{4}{n-2}} \delta$ defined on $A_{r, 1} := \{ x \in \mathbb{R}^{n}~|~r \le |x| \le 1 \}$,
where $m, |x|$ and $ \delta$ respectively denote a nonnegative constant, the Euclidean distance function from the origin and the Euclidean metric on $\mathbb{R}^{n}$.
Note that the metric $g_{r, m}$ is the Schwarzschild metric restricted on the annulus $A_{r, 1}$, and $g_{r, 0} \equiv \delta$.
In particular,   the scalar curvature of the metric $g_{r, m}$ is zero on $A_{r, 1}$.

In the following, we will use the notation $\mathbb{S}^{n-1}(R) := \{ x \in \mathbb{R}^{n}~|~|x| = R \}$.
We can calculate the mean curvature of each boundary component (while respecting to the orientation) as follows:
\[
\begin{split}
  H_{g_{r, m}} &=
\begin{cases}
  (n-1) \left( \frac{m}{r^{n-1}} - (1 + \frac{m}{2r^{n-2}}) \right) \left( 1 + \frac{m}{2r^{n-2}} \right)^{-\frac{n}{n-2}} &~\mathrm{on}~~\mathbb{S}^{n-1}(r), \\
  (n-1) \left( 1 - \frac{m}{2} \right) \left( 1 + \frac{m}{2} \right)^{-\frac{n}{n-2}} &~\mathrm{on}~~\mathbb{S}^{n-1}(1),
\end{cases} \\
H_{\delta} &=
\begin{cases}
  - \frac{n-1}{r} &~\mathrm{on}~~\mathbb{S}^{n-1}(r), \\
  n-1 &~\mathrm{on}~~\mathbb{S}^{n-1}(1).
\end{cases}
\end{split}
\]
From these, we can compute the Yamabe energy of each metric:
\[
\begin{split}
  \mathcal{E}(g_{r, m}) &= \frac{2(n-1) \mathrm{Vol}(\mathbb{S}^{n-1}(1), \delta) \left\{ (m-r^{n-1} - \frac{m}{2}r) (1+\frac{m}{2r^{n-2}}) + 1-\frac{m^{2}}{4} \right\}}{\left( (1 + \frac{m}{2})^{\frac{2(n-1)}{n-2}} + (1 + \frac{m}{2r^{n-2}})^{\frac{2(n-1)}{n-2}} r^{n-1} \right)^{\frac{n-2}{n-1}} \mathrm{Vol}(\mathbb{S}^{n-1}(1), \delta)^{\frac{n-2}{n-1}}}, \\
\mathcal{E}(\delta|_{A_{r, 1}}) &= \frac{2(n-1)(1-r^{n-2}) \mathrm{Vol}(\mathbb{S}^{n-1}(1), \delta)}{(1+r^{n-1})^{\frac{n-2}{n-1}}\mathrm{Vol}(\mathbb{S}^{n-1}(1), \delta)^{\frac{n-2}{n-1}}}.
\end{split}
\]
From this, we can see that the Yamabe energy of both metrics are positive. 
Since $\mathcal{E}(g_{r, m})$ converges to
\[
\frac{2(n-1)(2-r-r^{n-2})\mathrm{Vol}(\mathbb{S}^{n-1}(1), \delta)}{(1+r^{n-1})^{\frac{n-2}{n-1}} \mathrm{Vol}(\mathbb{S}^{n-1}(1), \delta)^{\frac{n-2}{n-1}}}
\]
as $m \rightarrow +\infty,$ there exists $m_{0}$ such that
$$\mathcal{E}(g_{r, m}) > \mathcal{E}(\delta) > 0~~\mbox{ for all }m \ge m_{0}.$$
Therefore, from the definition of the type II Yamabe metrics, the metric $g_{r, m}$, where $m \ge m_{0}$, is not a type II Yamabe scalar-flat metric on $A_{r, 1}$.
Moreover, the  boundary $\partial A_{r, 1} = \mathbb{S}^{n-1}(r) \sqcup \mathbb{S}^{n-1}(1)$
is umbilic with respect to  $g_{r, m}$, and its mean curvature is constant.

\section{Examples of type II Yamabe metrics with non-umbilic boundary}\label{section-nonumbilic}
In this section, we give examples of metrics which are type II Yamabe with non-umbilic boundary. In particular, such examples show that the converse of the Obata-type theorem does not hold in general.

Let $(r, \theta^{1}, \cdots, \theta^{n-1})$ be the polar coordinate charts on $\mathbb{R}^{n}~(n \ge 3)$ and $\phi : \mathbb{R} \rightarrow \mathbb{R}_{+}$ a smooth nonnegative function which is compactly supported in $(\theta_{0} - \varepsilon, \theta_{0} + \varepsilon)$ for some $\theta^{1} = \theta_{0}$ and sufficiently small $\varepsilon > 0$.

\subsection{On a ball in $\mathbb{R}^{n}$ with a bump}
Consider the closed unit ball with a ``bump'': $B_{\phi} := \{ (r, \theta^{1}, \cdots, \theta^{n-1})~|~0 \le r \le \phi(\theta^{1}) + 1,~(\theta^{1}, \cdots, \theta^{n-1}) \in \mathbb{S}^{n-1}(1) \} \subset \mathbb{R}^{n}$ with the restricted Euclidean metric $\delta|_{B_{\phi}}$.
Define a function $f$ on $\mathbb{R}^{n}$ by $f(r, \theta^{1}, \cdots, \theta^{n-1}) := r - \phi(\theta^{1})$, then
$B_{\phi}$ can be rewritten as
\[
B_{\phi} = \{ (r, \theta^{1}, \cdots, \theta^{n-1})~|~f(r, \theta^{1}, \cdots, \theta^{n-1}) \le 1 \}
\]
and
\[
\mathrm{grad} f = \frac{\partial}{\partial r} - r^{-2} \dot{\phi} \frac{\partial}{\partial \theta^{1}},
\]
where $\dot{\phi} = \frac{d \phi}{d\theta^{1}}$ and $\mathrm{grad} f$ is the gradient of $f$ with respect to the Euclidean metric on $\mathbb{R}^{n}$.
Hence, the unit normal vector field $\nu$ of the boundary $\partial B_{\phi}$ is
\[
\nu = \frac{\mathrm{grad} f}{||\mathrm{grad} f||} = \frac{\frac{\partial}{\partial r} - (\phi(\theta^{1}) + 1)^{-2} \dot{\phi} \frac{\partial}{\partial \theta^{1}}}{\sqrt{1 - (\phi(\theta^{1}) + 1)^{-2} \dot{\phi}^{2}}},
\]
where $||\mathrm{grad} f||$ is the Euclidean norm of $\mathrm{grad} f$.
And for any point $p \in \partial B_{\phi}$,
\[
T_{p} \partial B_{\phi} = \left\langle \dot{\phi}(\theta^{1}(p)) \left( \frac{\partial}{\partial r} \right)_{p} + \left( \frac{\partial}{\partial \theta^{1}} \right)_{p}, \left( \frac{\partial}{\partial \theta^{2}} \right)_{p}, \cdots, \left( \frac{\partial}{\partial \theta^{n-1}} \right)_{p} \right\rangle .
\]
Then, a computation shows that we can take a function $\phi$ so that the boundary $\partial B_{\phi}$ is not totally umbilic in $B_{\phi}$.
(In particular, there is a non-umbilic point on the set $\{ (r, \theta^{1}, \cdots, \theta^{n-1}) \in \partial B_{\phi}~|~\theta^{1} = \theta_{0} \}$.)
Since $\delta|_{B_{\phi}}$ is scalar-flat, it follows that $Y_{II}(B_{\phi}, \partial B_{\phi}, [\delta|_{B_{\phi}}])$ is finite as mentioned in Section \ref{section-2}.
Moreover, from \cite{escobar1992conformal} and \cite{marques2007conformal}, $Y_{II}(B_{\phi}, \partial B_{\phi}, [\delta|_{B_{\phi}}]) < Y_{II}(\mathbb{B}^{n}, \partial \mathbb{B}^{n}, [\delta])$ since the boundary $(\partial B_{\phi}, \delta|_{B_{\phi}})$ is not totally umbilic,
and hence there is a type II Yamabe metric $g =: u^{\frac{4}{n-2}} \delta|_{B_{\phi}}$ in $[\delta|_{B_{\phi}}]$.
Since umbilicity of the boundary is invariant under conformal changes (see {\cite[Proposition 1.2]{escobar1992yamabe}}), $g$ is also not totally umbilic.

\subsection{On the unit ball in $\mathbb{R}^{n}$}
Let $\mathbb{B}^{n}$ be the unit ball (with respect to the Euclidean metric $\delta$) in $\mathbb{R}^{n}~(n \ge 3)$.
Firstly we can easily construct a family of metrics $(g_{t})_{t \in [0, \varepsilon)}$ on $\mathbb{R}^{n}$ such that
\begin{itemize}
  \item $g_{t} \equiv \delta$ on $K$ for all $t \in [0, \varepsilon)$ for a compact set $K \subset \mathbb{R}^{n}$, which contains the closure of $\mathbb{B}^{n}$,
  \item $(\partial \mathbb{B}^{n}, g_{t})$ is not umbilic in $(\mathbb{R}^{n}, g_{t})$ for all $t \in (0, \varepsilon)$,
  \item $g_{t} \rightarrow \delta$ in the smooth sense as $t \rightarrow 0$.
\end{itemize}
Then there is a small $0 < \varepsilon_{0} < \varepsilon$ such that one can obtain a positive solution $u_{t}$ of the following partial differential equation for each $t \in [0, \varepsilon_{0})$,
\[
-\frac{4(n-1)}{n-2} \Delta_{g_{t}} u_{t} + R_{g_{t}} u_{t} = 0,
\]
and $u_{t} \rightarrow 1$ at infinity.
This is done in the similar way to that in {\cite[Proof of Lemma 3.1]{lee2021geometric}}.
Hence, for each $t \in (0, \varepsilon_{0})$, the metric $\tilde{g}_{t} := u_{t}^{\frac{4}{n-2}} g_{t}$ is scalar-flat and $\partial \mathbb{B}^{n}$ is not umbilic with respect to $\tilde{g}_{t}$.
Therefore one can obtain a type II Yamabe metric $\overline{g}_{t}$ in $[\tilde{g}_{t}]$ as in the previous example.
Then $(\mathbb{B}^{n}, \overline{g}_{t}|_{\mathbb{B}^{n}})~(t \in (0, \varepsilon_{0}))$ is a type II Yamabe metric with non-umbilic boundary.

\section{CR manifolds}\label{section-cr}

Let $M$ be a compact, connected, strictly pseudoconvex
manifold of real dimension $2n+1$ equipped with the contact form $\theta$.
Let $[\theta]$ be the conformal class of $\theta$.
We consider the functional $\mathcal{E}$ given by:
$$\mathcal{E}(\theta)=\frac{\int_MR_\theta dV_\theta}{\mbox{Vol}_\theta(M)^{\frac{n}{n+1}}},$$
where $R_\theta$ is the Webster scalar curvature of $\theta$,
$dV_\theta=\theta\wedge (d\theta)^n$ is the volume form with respect to $\theta$,
and $\mbox{Vol}_\theta(M)$ is the volume of $M$ with respect to $\theta$.
We say that $\overline{\theta}$ is a \textit{CR Yamabe contact form}
if $\overline{\theta}$ is a minimizer of $\mathcal{E}|_{[\theta]}$.
Some standard terminologies and definitions 
in CR geometry and in CR Yamabe problem could be found 
in  \cite{Ho,Jerison&Lee}.

We have the following theorem, which is the corresponding 
version of Theorem \ref{thm1}
in the CR case.

\begin{theorem}\label{CR_thm}
Let $\Theta$ be a CR Yamabe contact form in $M$
with $R_\Theta>0$.
Assume that $\theta$
is a contact form in $M$ with constant Webster scalar curvature
and that $\varphi$ is a CR diffeomorphism of $M$ such that
$dV_{\varphi^*\theta}=\gamma dV_\Theta$ for some positive constant $\gamma$.
If
\begin{equation}\label{2.1}
R_\theta d\theta\leq R_\Theta d\Theta
\end{equation}
on $T^{1,0}M\times T^{0,1}(M)$, then $\theta$ is also a CR Yamabe contact form.
Moreover, if
\begin{equation}\label{2.2}
R_\theta d\theta<R_\Theta d\Theta
\end{equation}
on $T^{1,0}M\times T^{0,1}(M)$,
then $\theta$ is a unique CR Yamabe contact form (up to positive constant) in
$[\theta]$.
\end{theorem}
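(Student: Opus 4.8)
The plan is to mirror the proof of Theorem \ref{thm1}, with the contact-form energy $\mathcal{E}$ in the role of the boundary Yamabe energy and the CR Yamabe operator replacing the conformal Laplacian; since $M$ is closed there are no boundary terms, so the argument is in fact cleaner and needs no measure-comparison hypothesis. First I would exploit that $\varphi$ is a CR diffeomorphism: it preserves the contact distribution $H=\ker\theta=\ker\Theta$ and the complex structure $J$, pulls back the Webster scalar curvature and the volume form covariantly ($R_{\varphi^*\theta}=R_\theta\circ\varphi$, $dV_{\varphi^*\theta}=\varphi^*dV_\theta$), and leaves both $\mathcal{E}$ and the conformal class invariant. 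Hence $\theta$ is a CR Yamabe contact form if and only if $\varphi^*\theta$ is, and exactly as in Theorem \ref{thm1} I may reduce to the normalized situation $\varphi=\mathrm{id}_M$, so that $dV_\theta=\gamma\,dV_\Theta$ while \eqref{2.1} continues to hold between $\theta$ and $\Theta$. Since a CR Yamabe contact form is by definition a minimizer of $\mathcal{E}$ over $[\theta]$, to prove the first assertion it suffices to show $\mathcal{E}(\bar\theta)\ge\mathcal{E}(\theta)$ for every $\bar\theta=u^{2/n}\theta\in[\theta]$ with $u>0$.

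Writing $b_n=2+\tfrac{2}{n}$ and $p=\tfrac{2(n+1)}{n}$ (so that $\tfrac{2}{p}=\tfrac{n}{n+1}$), the CR conformal change law for the Webster curvature, together with $dV_{\bar\theta}=u^{p}\,dV_\theta$ and integration by parts on the closed manifold $M$, yields
\[
\mathcal{E}(\bar\theta)=\frac{\int_M\bigl(b_n|\nabla_b u|_\theta^2+R_\theta u^2\bigr)\,dV_\theta}{\left(\int_M u^{p}\,dV_\theta\right)^{n/(n+1)}}.
\]
Next I would convert the Levi-form inequality \eqref{2.1} into a pointwise comparison of horizontal gradients. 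Since $|\nabla_b u|_\theta^2$ is computed with the cometric of the Webster metric $G_\theta=d\theta(\cdot,J\cdot)$ on $H$, and since $\theta,\Theta$ share the same $H$ so that $du|_H$ is common to both, inverting the positive-definite form inequality $R_\theta\,G_\theta\le R_\Theta\,G_\Theta$ gives
\[
\frac{R_\theta}{R_\Theta}\,|\nabla_b u|_\Theta^2\le|\nabla_b u|_\theta^2\qquad\text{on }M,
\]
the exact CR analogue of the estimate $\tfrac{H_h}{H_g}|\nabla_g u|_g^2\le|\nabla_h u|_h^2$ in Theorem \ref{thm1}.

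Combining this with $dV_\theta=\gamma\,dV_\Theta$ and the constancy of $R_\theta,R_\Theta$, the numerator obeys
\[
\int_M\bigl(b_n|\nabla_b u|_\theta^2+R_\theta u^2\bigr)\,dV_\theta\ge\gamma\,\frac{R_\theta}{R_\Theta}\int_M\bigl(b_n|\nabla_b u|_\Theta^2+R_\Theta u^2\bigr)\,dV_\Theta,
\]
where the zeroth-order terms match exactly because $\tfrac{R_\theta}{R_\Theta}\cdot R_\Theta=R_\theta$. Dividing by $\bigl(\int_M u^p\,dV_\theta\bigr)^{n/(n+1)}=\gamma^{n/(n+1)}\bigl(\int_M u^p\,dV_\Theta\bigr)^{n/(n+1)}$, I obtain
\[
\mathcal{E}(\bar\theta)\ge\gamma^{1/(n+1)}\,\frac{R_\theta}{R_\Theta}\,\mathcal{E}\!\left(u^{2/n}\Theta\right)\ge\gamma^{1/(n+1)}\,\frac{R_\theta}{R_\Theta}\,\mathcal{E}(\Theta),
\]
the last step using that $\Theta$ is a CR Yamabe contact form. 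Since $\mathcal{E}(\Theta)=R_\Theta\,\mathrm{Vol}_\Theta(M)^{1/(n+1)}$, $\mathcal{E}(\theta)=R_\theta\,\mathrm{Vol}_\theta(M)^{1/(n+1)}$ and $\mathrm{Vol}_\theta(M)=\gamma\,\mathrm{Vol}_\Theta(M)$, the right-hand side equals $\mathcal{E}(\theta)$ exactly, so $\mathcal{E}(\bar\theta)\ge\mathcal{E}(\theta)$ and $\theta$ is a CR Yamabe contact form.

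For the uniqueness statement, suppose $\bar\theta=u^{2/n}\theta$ is also a CR Yamabe contact form, so $\mathcal{E}(\bar\theta)=\mathcal{E}(\theta)$ and every inequality above is an equality; in particular the gradient comparison is saturated, and under the strict hypothesis \eqref{2.2} this can occur only where $\nabla_b u=0$, forcing $\nabla_b u\equiv0$ on $M$. I expect the main obstacle to be the concluding step, which is genuinely CR-specific: unlike the Riemannian case, vanishing of the \emph{horizontal} gradient alone does not immediately give constancy, so one must invoke the bracket-generating (Hörmander) property of the contact distribution together with the Chow--Rashevskii theorem, so that any two points of the connected $M$ are joined by a horizontal path along which $u$ is constant; this gives $u\equiv\mathrm{const}$ and hence $\bar\theta=c\,\theta$ for some $c>0$. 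The other delicate point is the passage from the tensorial inequality \eqref{2.1} on $T^{1,0}M\times T^{0,1}M$ to the cometric comparison of horizontal gradients, where the inversion of the weighted Levi forms must be carried out carefully.
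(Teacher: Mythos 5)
Your proposal is correct and follows essentially the same route as the paper's proof: reduction to $\varphi=\mathrm{id}_M$ by CR invariance, the conformal-change/integration-by-parts identity for $\mathcal{E}(\bar\theta)$, inversion of the weighted Levi forms to get $\frac{R_\theta}{R_\Theta}|\nabla_\Theta u|^2\le|\nabla_\theta u|^2$, the volume normalization $dV_\theta=\gamma\,dV_\Theta$, minimality of $\Theta$, identification of the lower bound with $\mathcal{E}(\theta)$, and the equality analysis for uniqueness. Your two ``delicate points'' are handled correctly and are in fact slightly more careful than the paper, which passes from $\nabla_\theta u\equiv 0$ to $u\equiv\mathrm{const}$ citing only connectedness, whereas you rightly note that for the horizontal gradient this requires the bracket-generating property of the contact distribution (Chow--Rashevskii).
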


\begin{proof}[Proof of Theorem \ref{CR_thm}]
Since the Webster scalar curvature is preserved under the pullback action of
CR diffeomorphism, it suffices to consider the case when $\varphi=id_M$.

Let $\overline{\theta}=u^{\frac{2}{n}}\theta\in [\theta]$
where $0<u\in C^\infty(M)$.
Then
\begin{equation*}
\begin{split}
\mathcal{E}(\overline{\theta})=
\frac{\int_MR_{\overline{\theta}} dV_{\overline{\theta}}}{\mbox{Vol}_{\overline{\theta}}(M)^{\frac{n}{n+1}}}
&=\frac{\int_Mu^{-(1+\frac{2}{n})}\big(-(2+\frac{2}{n})\Delta_\theta u+R_\theta u\big)u^{2+\frac{2}{n}}dV_\theta}{(\int_Mu^{2+\frac{2}{n}}dV_\theta)^{\frac{n}{n+1}}}\\
&=\frac{\int_M\big(-(2+\frac{2}{n})u\Delta_\theta u+R_\theta u^2\big)dV_\theta}{(\int_Mu^{2+\frac{2}{n}}dV_\theta)^{\frac{n}{n+1}}},
\end{split}
\end{equation*}
where $\Delta_\theta$ is the sub-Laplacian with respect to $\theta$.
By integration by parts, we then have
$$\mathcal{E}(\overline{\theta})=
\frac{\int_M\big((2+\frac{2}{n})|\nabla_\theta u|^2+R_\theta u^2\big)dV_\theta}{(\int_Mu^{2+\frac{2}{n}}dV_\theta)^{\frac{n}{n+1}}}.$$
By the assumption (\ref{2.1}), we have
$$\frac{R_\theta}{R_\Theta}|\nabla_\Theta u|^2\leq |\nabla_\theta u|^2.$$
Combining this with the assumption that $dV_{\theta}=\gamma dV_\Theta$, we obtain
\begin{equation*}
\begin{split}
\mathcal{E}(\overline{\theta})&=
\frac{\int_M\big((2+\frac{2}{n})|\nabla_\theta u|^2+R_\theta u^2\big)\gamma dV_\Theta}{(\int_Mu^{2+\frac{2}{n}}\gamma dV_\Theta)^{\frac{n}{n+1}}}\\
&\geq\gamma^{1-\frac{n}{n+1}}\frac{R_\theta}{R_{\Theta}}\frac{\int_M\big((2+\frac{2}{n})|\nabla_\Theta u|^2+R_\Theta u^2\big)  dV_\Theta}{(\int_Mu^{2+\frac{2}{n}}  dV_\Theta)^{\frac{n}{n+1}}}\\
&=\gamma^{1-\frac{n}{n+1}}\frac{R_\theta}{R_{\Theta}}\mathcal{E}(u^{\frac{2}{n}}\Theta).
\end{split}
\end{equation*}
Since $\Theta$ is a CR Yamabe contact form  and $R_{\Theta}$, $R_\theta>0$, we have
\begin{equation}\label{2.3}
\mathcal{E}(\overline{\theta})\geq \gamma^{1-\frac{n}{n+1}}\frac{R_\theta}{R_{\Theta}}\mathcal{E}(\Theta).
\end{equation}
Note that the right hand side of (\ref{2.3}) is equal to
\begin{equation*}
\begin{split}
\gamma^{1-\frac{n}{n+1}}\frac{R_\theta}{R_{\Theta}}\mathcal{E}(\Theta)
=\gamma^{1-\frac{n}{n+1}}\frac{R_\theta}{R_{\Theta}}\frac{\int_MR_\Theta dV_\Theta}{(\int_MdV_\Theta)^{\frac{n}{n+1}}}
&=\frac{\int_MR_\theta \gamma dV_\Theta}{(\int_M\gamma dV_\Theta)^{\frac{n}{n+1}}}\\
&=\frac{\int_MR_\theta dV_\theta}{(\int_M  dV_\theta)^{\frac{n}{n+1}}}
=\mathcal{E}(\theta).
\end{split}
\end{equation*}
Therefore, by the definition of the CR Yamabe contact form,
$\theta$ is a CR Yamabe contact form.

In the above argument, if we also assume that $\overline{\theta}$
is also a CR Yamabe contact form,
then we must have $\mathcal{E}(\overline{\theta})=\mathcal{E}(\theta)$.
In particular, the inequality in (\ref{2.3}) becomes equality.
Hence, if we further assume (\ref{2.2}),
then we have $\nabla_\theta u\equiv 0$ in $M$.
Therefore, $u\equiv$ constant in $M$, for $M$ is connected.
This means that $\theta$ is a unique CR Yamabe contact form up to positive constant in $[\theta]$.
\end{proof}

\section*{Acknowledgement}
SH was supported by JSPS KAKENHI Grant Number 24KJ0153. PTH was supported by   the National Science and Technology Council (NSTC),
Taiwan, with grant Number: 112-2115-M-032 -006 -MY2.

\bibliographystyle{amsplain}

\begin{thebibliography}{30}
\bibitem{akutagawa2021obata}
K. Akuatagawa,
An Obata-type theorem on compact Einstein manifolds with boundary,
Geom. Dedicata \textbf{213} (2021), 577--587.

\bibitem{almaraz2010existence}
S. Almaraz,
An existence theorem of conformal scalar-flat metrics on manifolds with boundary,
Pacific J. Math. \textbf{248} (2010), 1--22.


\bibitem{aubin1976equations}
T. Aubin,
{\'E}quations diff{\'e}rentielles non lin{\'e}aires et probl{\`e}me de Yamabe concernant la courbure scalaire,
J. Math. Pures Appl. \textbf{55} (1976), 269--296.

\bibitem{banyaga1974volume}
A. Banyaga,
Forms-volume sur les vari\'{e}t\'{e}s \`{a} bord, Enseign. Math. \textbf{20} (1974), 127--131.

\bibitem{beckner1993sharp}
W. Beckner,
Sharp Sobolev inequalities on the sphere and the Moser--Trudinger inequality,
Ann. Math. \textbf{138} (1993), 213--242.


\bibitem{cardenas2019uniqueness}
E. C{\'a}rdenas and W. Sierra,
Uniqueness of solutions of the Yamabe problem on manifolds with boundary,
Nonlinear Analysis \textbf{187} (2019), 125--133.

\bibitem{cherrie1984problemes}
P. Cherrier,
Probl\`{e}mes de Neumann non lin\'{e}aires sur les vari\'{e}t\'{e}s riemanniennes,
J. Funct. Anal. \textbf{57} (1984), 154--206.


\bibitem{escobar1988sharp}
J. Escobar,
Sharp constant in a Sobolev trace inequality,
Indiana Math. \textbf{37} (1988), 687--698.

\bibitem{escobar1990uniqueness}
J. Escobar,
Uniqueness theorems on conformal deformation of metrics, Sobolev inequalities, and an eigenvalue estimate,
Comm. Pure Appl. Math. \textbf{43} (1990), 857--883.

\bibitem{escobar1992yamabe}
J. F. Escobar,
The Yamabe problem on manifolds with boundary,
J. Diff. Geom. \textbf{35} (1992), 21--84.

\bibitem{escobar1992conformal}
J. F. Escobar,
Conformal deformation of a Riemannian metric to a scalar flat metric with constant mean curvature on the boundary,
Ann. Math. \textbf{136} (1992), 1--50.

\bibitem{escobar1994addendum}
J. F. Escobar,
Addendum: conformal deformation of a Riemannian metric to a scalar flat metric with constant mean curvature on the boundary,
Ann. Math. \textbf{139} (1994), 749--750.

\bibitem{escobar2004conformal}
J. F. Escobar and G. Garcia,
Conformal metrics on the ball with zero scalar curvature and prescribed mean curvature on the boundary,
J. Funct. Anal. \textbf{211} (2004), 71--152.

\bibitem{girouard2017spectral}
A. Girouard and I. Polterovich,
Spectral geometry of the Steklov problem,
J. Spectr. Theory \textbf{7} (2017), 321--359.



\bibitem{hamanaka2021non}
S. Hamanaka,
Non-Einstein relative Yamabe metrics,
Kodai Math. J. \textbf{44} (2021), 265--272.

\bibitem{Ho}
P. T. Ho,  
The second CR Yamabe invariant.  
Pacific J. Math. \textbf{280} (2016), no. 2, 371–400. 

\bibitem{Jerison&Lee}
D. Jerison and J. M. Lee,  
The Yamabe problem on CR manifolds.
J. Differential Geom. \textbf{25} (1987), no. 2, 167–197. 




\bibitem{shin1994examples}
S. Kato,
Examples of non-Einstein Yamabe metrics with positive scalar curvature,
Tokyo J. Math. \textbf{17} (1994), 187--189.

\bibitem{lee2021geometric}
Dan A. Lee,
Geometric Relativity,
Graduate studies in mathematics \textbf{201}, American Mathematical Society, Providence (2019).

\bibitem{marques2005existence}
F. C. Marques,
Existence results for the Yamabe problem on manifolds with boundary,
Indianan Univ. Math. J. \textbf{54} (2005), 1599--1620.

\bibitem{marques2007conformal}
F. C. Marques,
Conformal deformations to scalar-flat metrics with constant mean curvature on the boundary,
Commun. Anal. Geom. \textbf{15} (2007), 381--405.

\bibitem{obata1962certain}
M. Obata,
Certain conditions for a Riemannian manifold to be isometric with a sphere,
J. Math. Soc. Japan \textbf{14} (1962), 333--340.

\bibitem{obata1971conjectures}
M. Obata,
The conjectures on conformal transformations of Riemannian manifolds,
Bull. Amer. Math. Soc. \textbf{77} (1971), 265--270.

\bibitem{schoen1984conformal}
R. Schoen,
Conformal deformation of a Riemannian metric to constant scalar curvature,
J. Diff. Geom. \textbf{20} (1984), 479--495.

\bibitem{trudinger1968remarks}
N. Trudinger,
Remarks concerning the conformal deformation of a Riemannian structure on compact manifolds
Ann. Scuola Norm. Sup. Pisa Cl. Sci. \textbf{22} (1968), 165--274.

\bibitem{yamabe1960deformation}
H. Yamabe,
On a deformation of Riemannian structures on compact manifolds,
Osaka Math. J. \textbf{12} (1960), 21--37.

\end{thebibliography}

\end{document}